\RequirePackage[final]{graphicx}
 \documentclass[oneside, usenames, dvipsnames, svgnames, table, final]{amsart}

 \synctex=-1

\usepackage[T1]{fontenc}
\usepackage[utf8]{inputenc}

\usepackage{fourier}
\frenchspacing
\usepackage{longtable}
\usepackage{overpic}

%% First the package includes
\usepackage{etoolbox}
\usepackage{ifdraft}

%% Unfortunately, the following three packages are buggy. They cause an error on my machine (office Mac Pro)  unless
%% loaded in this order hyperref, amsrefs, showkeys. Then there is a further bug in how amsrefs interacts with shokeys
%% in 'final' mode. Showkeys should do nothing, but amsrefs doesn't respect this (as far as I can tell) so we inster
%% code to avoid loading showkeys if the 'final' option is set.

\usepackage[colorlinks=true, citecolor=green!50!black, linkcolor=red!50!black, pagebackref=false, final]{hyperref}

\ifdraft{\usepackage[color,notref, notcite]{showkeys}}{}
\usepackage[letterpaper]{geometry}
\usepackage{xargs}
\usepackage{amsmath, amsthm, mathrsfs, amssymb}
\usepackage{xypic}
\usepackage{tikz-cd} 
\usepackage[final]{graphicx}
\usepackage{enumitem}
\usepackage{bm}
\usepackage{xcolor}
\usepackage[en-AU]{datetime2}
\usepackage[T1]{fontenc}

\newcommand{\mathbx}[1]{\mathbb{#1}}

\usepackage[colorinlistoftodos,prependcaption,textsize=tiny]{todonotes} % This package can be troublesome

\def\showkeyslabelformat[#1]{%
   {\tiny #1}\par}

%% Now the theorem definitions

\theoremstyle{plain}
\ifcscounter{chapter}{%
        \newtheorem{thm}{Theorem}[chapter]%
        }%
        {%
        \newtheorem{thm}{Theorem}[]%
        }
\newtheorem{theorem}[thm]{Theorem}

\newtheorem{lemma}[thm]{Lemma}

\newtheorem{corollary}[thm]{Corollary}

\newtheorem{proposition}[thm]{Proposition}

\newtheorem*{claim*}{Claim} %Claim
\newtheorem*{thm*}{Theorem}

\theoremstyle{definition}

\newtheorem{notation}[thm]{Notation}

\theoremstyle{remark}

\newtheorem{remark}[thm]{Remark}

\newtheorem{conjecture}[thm]{Conjecture}

%% The definitions that change between Blackboard Bold and Bold:

\newcommand{\R}{\mathbx{R}}
\newcommand{\Z}{\mathbx{Z}}

\newcommand{\RR}{\R}
\newcommand{\ZZ}{\Z}

\newcommand{\HH}{\mathbx{H}}

%% Now, in no particular order, the math definitions. Come back later and tidy this up.

% Homological Algebra

\newcommand{\OG}{\operatorname{O}}
\newcommand{\Og}{\OG}

% Category Theory

%\newcommand{\m}{\mathfrak{m}}

%\newcommand{\Nis}{\sh{Nis}}

\newcommand{\SO}{\operatorname{SO}}

%% Algebraic geometry

\newcommand{\sm}{\setminus}

\newcommand{\im}{\operatorname{im}}

%\renewcommand{\O}{{\mathcal O}}

%\newcommand{\id}{\operatorname{id}}

 % Zariski

\renewcommand{\vec}[1]{\mathbf{#1}}

%%% Local Variables:
%%% mode: plain-tex
%%% TeX-master: t
%%% End:

\usepackage{colortbl}
\usepackage{afterpage}
\usepackage{pdflscape}
\usepackage{changes}

\definecolor{highlight}{rgb}{0.99,0.96,0.94}

\usepackage[backref=true,backend=biber,style=alphabetic,citestyle=alphabetic,url=false]{biblatex}
\addbibresource{BenW_Standard_BibTeX5.bib}
\addbibresource{Article.bib}

\newcommand{\lalog}[2][]{\ifdraft{\todo[linecolor=Red,backgroundcolor=Red!25,bordercolor=Red,#1]{#2---Lalo G.}}{}}
\newcommand{\benw}[2][]{\ifdraft{\todo[linecolor=Green,backgroundcolor=Green!25,bordercolor=Green,#1]{#2---Ben W.}}{}}

\newcommand{\w}{\omega}

\newcommand{\Homeo}{\operatorname{Homeo}}
\newcommand{\Diff}{\operatorname{Diff}}

\newcommand{\link}{\operatorname{lk}}

\author{Luis Eduardo Garc\'ia-Hern\'andez}
\address{Instituto de Matemáticas UNAM\\
Ciudad Universitaria
Coyoacán, CDMX 04510, MÉXICO.
}
\email[L. E. Garc\'ia-Hern\'andez]{legh@ciencias.unam.mx}

\author{Ben Williams}
\address{Department of Mathematics\\
  1984 Mathematics Rd \\
  Vancouver BC V6T 1Z2\\
CANADA.}
\email[B.~Williams]{tbjw@math.ubc.ca}

\begin{document}   

\subjclass{57S25, 20H15}

\thanks{We acknowledge the support of the Natural Sciences and Engineering Research Council of Canada (NSERC), RGPIN-2021-02603.}

\title{Linear and smooth oriented equivalence of orthogonal representations of finite groups}

\begin{abstract}
  Let $\Gamma$ be a finite group. We prove that if $\rho , \rho': \Gamma \to \Og(4)$ are two representations
  that are conjugate by an orientation-preserving diffeomorphism of $S^3$, then they are conjugate by an element of
  $\SO(4)$. In the process, we prove that if $G \subset \Og(4)$ is a finite group, then exactly one of the following is
  true: the elements of $G$ have a common invariant $1$-dimensional subspace in $\R^4$; some element of $G$ has no invariant
  $1$-dimensional subspace; or $G$ is conjugate to a specific group $K \subset \Og(4)$ of order $16$.
\end{abstract}

\maketitle

\section{Introduction}
Let $n$ be a positive integer. The orthogonal group $\Og(n)$ is the group of isometries of $S^{n-1}$, and $\SO(n)$ is the group of orientation-preserving isometries. The group $\Og(n)$ is a subgroup of $\Diff(S^{n-1})$, the group of diffeomorphisms of $S^{n-1}$, and $\SO(n) = \Og(n) \cap \Diff^{+}(S^{n-1})$, where $\Diff^{+}$ indicates the group of orientation-preserving diffeomorphisms.

Let $\Gamma$ be a (discrete) finite group, and let $G, H$ be subgroups of $\Diff(S^{n-1})$. We say two homomorphisms $\rho, \rho' : \Gamma \to G$ are \emph{$H$-equivalent} if there exists some $h \in H$ such that $h^{-1} \rho(\gamma) h = \rho'(\gamma)$ for all $\gamma \in \Gamma$.% If $G=H$, then we simply say that the homomorphisms are \emph{equivalent}.

We make the following conjecture, a special case of which arose in the process of classifying types of symmetries of oriented knots in
$S^3$ in \cite{Boyle2023}.
\begin{conjecture} \label{conj:mainConj}
     If $\rho, \rho' : \Gamma \to \Og(n)$ are $\Og(n)$-equivalent and $\Diff^{+}(S^{n-1})$-equivalent, then they are $\SO(n)$-equivalent.
   \end{conjecture}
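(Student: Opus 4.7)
The plan is to reduce Conjecture~\ref{conj:mainConj} to a question about centralizers of finite subgroups of $\Og(n)$. Suppose $\rho, \rho' : \Gamma \to \Og(n)$ are $\Og(n)$-equivalent via $g \in \Og(n)$ and $\Diff^{+}(S^{n-1})$-equivalent via $h \in \Diff^{+}(S^{n-1})$. If $g \in \SO(n)$ we are done, so assume $\det g = -1$. Setting $G := \rho(\Gamma)$, one checks from the identities $g^{-1} \rho g = \rho' = h^{-1} \rho h$ that $\phi := g h^{-1}$ commutes with every element of $G$; and since $g$ is orientation-reversing while $h^{-1}$ is not, $\phi$ itself reverses orientation. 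It therefore suffices to establish the following key lemma: \emph{if a finite subgroup $G \subset \Og(n)$ admits an orientation-reversing diffeomorphism of $S^{n-1}$ in its centralizer, then it also admits an element of $\Og(n) \setminus \SO(n)$ in its centralizer.} Given such a linear $k$, the product $kg$ lies in $\SO(n)$ and still conjugates $\rho$ to $\rho'$, completing the argument.

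To attack the key lemma, I would compute the linear centralizer $Z_{\Og(n)}(G)$ using the real isotypic decomposition $\R^n = \bigoplus_i V_i^{\oplus n_i}$, with division algebras $D_i = \End_{\R[G]}(V_i) \in \{\R, \C, \mathbb{H}\}$. Block by block, the centralizer in $\Og(V_i^{\oplus n_i})$ is $\Og(n_i)$, $\U(n_i)$, or $\Sp(n_i)$, respectively. Since $\U(n_i)$ and $\Sp(n_i)$ are connected and therefore act with real determinant $+1$, while an element $A \in \Og(n_i)$ acts on $V_i^{\oplus n_i}$ with real determinant $(\det A)^{\dim_\R V_i}$, the group $Z_{\Og(n)}(G)$ fails to be contained in $\SO(n)$ exactly when $\rho$ has an absolutely irreducible summand of odd real dimension. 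In that ``good'' case, a reflection on the appropriate multiplicity space furnishes the desired $k$, and the conjecture is reduced to elementary linear algebra.

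The hard case---the main obstacle---is the complementary one: every absolutely irreducible summand of $\rho$ has even real dimension, so $Z_{\Og(n)}(G) \subset \SO(n)$, and one must rule out the existence of any orientation-reversing $G$-equivariant diffeomorphism of $S^{n-1}$. This requires genuine geometric input beyond representation theory, since a priori an exotic equivariant diffeomorphism could exploit the topology of the quotient $S^{n-1}/G$ or of lower-dimensional fixed-point strata in ways invisible to linear algebra. My plan would be to study how $\phi$ permutes the round subspheres fixed by commuting involutions in $G$, apply Smith-theoretic constraints to these permutations, and exploit the fact that the identity component of $Z_{\Diff(S^{n-1})}(G)$ is already detected by the Lie group $Z_{\Og(n)}(G)$. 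As the $n=4$ analysis in the present paper indicates---where one must distinguish whether $G$ has a common invariant line and separately handle the sporadic order-$16$ group $K \subset \Og(4)$---I would not expect a uniform argument to succeed, but rather a case-by-case treatment organized by representation type.
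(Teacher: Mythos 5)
Your first two paragraphs reproduce, correctly, the paper's own reduction: the observation that $\phi = gh^{-1}$ is an orientation-reversing diffeomorphism centralizing $G=\rho(\Gamma)$ is exactly the content of Proposition~\ref{pr:equivalence} (the equivalence of Conjectures~\ref{conj:mainConj} and~\ref{conj:subgroup}), and your isotypic-decomposition criterion for when $Z_{\Og(n)}(G)\not\subset\SO(n)$ is equivalent to Proposition~\ref{pr:chiralRepClassifier}: an odd-dimensional irreducible real summand automatically has endomorphism algebra $\R$, so ``absolutely irreducible summand of odd real dimension'' is the same condition as ``odd-dimensional invariant subspace.'' Up to this point the argument is sound and identical in substance to the paper's.

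The gap is that essentially the entire content of the theorem lies in the case you defer: showing that a finite $G\subset\Og(n)$ with no odd-dimensional invariant subspace admits no orientation-reversing diffeomorphism of $S^{n-1}$ in its centralizer. You neither execute this step nor propose tools that could succeed. Smith-theoretic constraints on fixed-point strata of involutions give nothing for the central difficulty in dimension $4$, namely cyclic groups generated by a double rotation, which act freely on $S^3$ and have no fixed-point strata to permute; and the claim that the identity component of $Z_{\Diff(S^{n-1})}(G)$ is ``detected by'' $Z_{\Og(n)}(G)$ is itself a nontrivial equivariant rigidity assertion you would have to prove. What the paper actually does is: (i) for a double rotation $g$ of order $m$, construct an orientation-sensitive smooth conjugacy invariant, the linking number mod $m$ of the two $g$-invariant round circles with their $g$-induced orientations, whose $\Diff^{+}(S^3)$-invariance rests on Freedman--Skora's theorem that invariant unknotted circles are isotopic to round ones through invariant circles (Proposition~\ref{pr:Cyclicn4}); (ii) carry out a complete case analysis of the Conway--Smith/Rastanawi--Schymura classification of finite subgroups of $\Og(4)$ to prove Theorem~\ref{th:subgroup}, namely that apart from one exceptional order-$16$ group $K$, every finite subgroup either has an invariant line or contains such a double rotation; and (iii) dispose of $K$ by a separate linking-number argument (Proposition~\ref{pr:KsmChiral}). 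None of this is present in, or recoverable from, your sketch, so the proposal is a correct reduction but not a proof; note also that the paper only establishes the conjecture for $n\le 5$, whereas your plan is framed for general $n$, where the statement remains open.
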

If $\rho : \Gamma \to \Og(n)$, then it will be convenient to say this conjecture holds ``for $\rho$'' if it holds for
all pairs $(\rho, \rho')$.

When $n \le 5$, \cite{Cappell1999} proves that  $\rho, \rho' : \Gamma \to \Og(n)$ are $\Og(n)$-equivalent if they are
$\Diff(S^{n-1})$-equivalent (in fact, if they are $\Homeo(S^{n-1})$-equivalent). This is part of a larger literature on
determining when topological and linear equivalence of operators or representations of finite groups coincide. See for
instance \cite{Kuiper1973}, \cite{Cappell1981}, \cite{Hambleton2005}.

When $n\le 5$, therefore, our conjecture
can be restated:
\begin{quote}
   If $\rho, \rho' : \Gamma \to \Og(n)$ are $\Diff^{+}(S^{n-1})$-equivalent, then they are $\SO(n)$-equivalent.
\end{quote}

\medskip

Linear equivalence, i.e., $\Og(n)$-equivalence, is thoroughly studied. There are some homomorphisms $\rho : \Gamma \to \Og(n)$ for which the $\Og(n)$- and $\SO(n)$-equivalence classes coincide%, and these homomorphisms will be called \emph{linearly amphichiral}
. For these homomorphisms, Conjecture \ref{conj:mainConj} holds trivially. Otherwise, the $\Og(n)$-equivalence class of $\rho$ splits into two $\SO(n)$-equivalence classes.% That is, the set of equivalent representations decomposes as two chiral subsets. The homomorphism $\rho$ will be called \emph{linearly chiral}.

The simplest example of a homomorphism whose $\Og(n)$-equivalence class decomposes as two $\SO(n)$-equivalence classes is that of 
\[ \rho_i : C_m = \langle t \mid t^m = 1 \rangle \to \Og(2), \quad \rho(t) = \begin{bmatrix}
    \cos \frac{2 i\pi}{m} & - \sin \frac{2 i\pi}{m} \\ 
    \sin \frac{2 i\pi}{m} & \cos \frac{2 i\pi}{m} 
\end{bmatrix} \]
when $m \ge 3$ and $i= \pm 1$. Conjugation by $w=\begin{bmatrix} -1 & 0 \\ 0 & 1 \end{bmatrix}$ takes $\rho_1$ to $\rho_{-1}$, so that these homomorphisms are $\Og(2)$-equivalent. They take values in $\SO(2)$, which is abelian, so that we immediately conclude that they are not $\SO(2)$-equivalent. Conjecture \ref{conj:mainConj} says that these homomorphisms are not $\Diff^{+}(S^1)$-equivalent.

Indeed, we can find a topological invariant distinguishing $\rho_{1}$ from $\rho_{-1}$. The two homomorphisms give actions of $C_m$ on $S^1$. Mark a point $p \in S^1$. We may list the points in the orbit of $p$ under the $C_m$-action, starting with $p$ and proceeding around the oriented $S^1$ in the positive direction. In the case of $\rho_{1}$, the list is $(p, t \cdot p, t^2 \cdot p, \dots )$ and in the case of $\rho_{-1}$, the list is $(p, t^{m-1}\cdot p, t^{m-2}\cdot p, \dots )$. These lists, and in particular, which of $t\cdot p$ or $t^{m-1}\cdot p$ is first encountered on a path starting at $p$ and proceeding in the positive direction, are invariants of the $\Diff^+(S^1)$-equivalence class of the representation, so that $\rho_{1}$ and $\rho_{-1}$ are not equivalent in this sense. An illustrative graphic in the case of $m=5$ is given in Figure \ref{fig:C5S1}.
\begin{figure}
    \centering
    \begin{minipage}{0.4\textwidth}
  \centering
  \includegraphics[width=\linewidth]{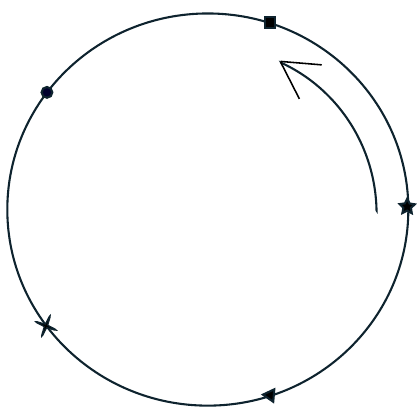}
  \caption{figure}{An oriented circle, a marked point $p$ (indicated by $\star$) and its orbit under a $C_5$-action by isometries. The point $\blacksquare$ is $t \cdot p$.}
  \label{fig:C5S1}
\end{minipage} \hfil
\begin{minipage}{.4\textwidth}
  \centering
  \includegraphics[width=\linewidth]{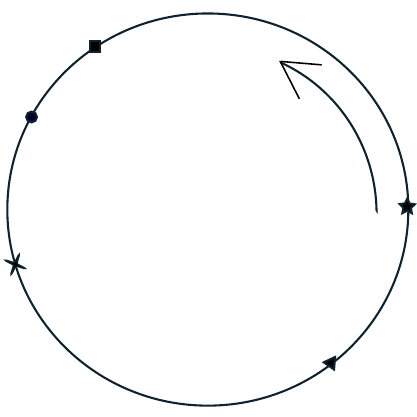}
  \caption{figure}{An oriented circle, a marked point $p$ (indicated by $\star$) and its orbit under a $C_5$-action $\Diff^+(S^1)$-equivalent to the action of Figure \ref{fig:C5S1}. The point $\blacksquare$ is $t \cdot p$.}
  \label{fig:C5S1v}
\end{minipage}
\end{figure}

This paper proves Conjecture \ref{conj:mainConj} when $n \le 5$. If $n$ is odd, then the conjecture happens to be trivially true. Case $n=2$ is settled by the argument sketched above for $C_m$, which leaves only $n=4$ requiring substantial work.

\begin{remark}
  Besides being intrinsically interesting, the $n=4$ case of Conjecture \ref{conj:mainConj} is likely to be useful in classifying the
  types of symmetries that a link in $S^3$ may possess, generalizing what was done in \cite{Boyle2023} for knots.
\end{remark}
\subsection*{Centralizers}

For the purposes of establishing Conjecture \ref{conj:mainConj}, there is nothing to be lost in assuming $\rho$ and
$\rho'$ are injective homomorphisms, since $\Gamma$ can be replaced by $\Gamma/(\ker \rho)$. From now on, we consider
only injective homomorphisms $\rho : \Gamma \to \Og(n)$. It is possible to restate Conjecture \ref{conj:mainConj} in terms of subgroups of $\Og(n)$ only. 

\begin{notation}
    We say that a subgroup $G \subset \Og(n)$ has the \emph{amphichiral linear centralizer property (has LCP)} if there
    exists some $w \in \Og(n) \sm \SO(n)$ such that $w^{-1}g w = g$ for all $g \in G$, i.e., if the centralizer of $G$
    in $\Og(n)$ is not contained in $\SO(n)$. Equivalently, $G$ has LCP if the inclusion homomorphism $G \to \Og(n)$ is linearly amphichiral.

    Similarly, we say that a subgroup $G \subset \Diff(S^{n-1})$ has the \emph{amphichiral smooth centralizer property
      (has SCP)} if there exists some $w \in \Diff(S^{n-1})\sm \Diff^{+}(S^{n-1})$ such that $w^{-1}g w = g$ for all $g \in G$.
\end{notation}
If $H$ is a subgroup of a group having LCP (resp.~SCP), then $H$ also has LCP (resp.~SCP). %We make a conjecture concerning subgroups of $\Og(n)$.
\begin{conjecture} \label{conj:subgroup}
    Let $G$ be a finite subgroup of $\Og(n)$. If $G$ has SCP, then it has LCP. Equivalently, if the centralizer of $G$ in $\Diff(S^{n-1})$ is not contained in $\Diff^+(S^{n-1})$, then it contains an element in $\Og(n) \sm SO(n)$.
\end{conjecture}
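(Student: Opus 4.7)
The plan is to prove Conjecture~\ref{conj:subgroup} for $n \le 5$ by case analysis on $n$, with the essential content at $n=4$. For $n$ odd, $-\id \in \Og(n)\sm\SO(n)$ is central in $\Og(n)$ and centralizes every finite subgroup, so LCP holds trivially. For $n = 2$, finite subgroups of $\Og(2)$ are cyclic or dihedral; a direct analysis of centralizers in $\Diff(S^1)$ via the orbit-ordering invariant sketched in the introduction handles both types, since if an orientation-reversing diffeomorphism of $S^1$ centralizes $G$ then the cyclic ordering of every orbit is already symmetric enough to admit a reflection by an element of $\Og(2)\sm\SO(2)$.

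For $n = 4$, I would invoke the trichotomy stated in the abstract. If the elements of $G$ share a common invariant $1$-dimensional subspace $L \subset \R^4$, then $G$ preserves the decomposition $\R^4 = L \oplus L^\perp$, and the reflection acting as $-\id$ on $L$ and $\id$ on $L^\perp$ is an orientation-reversing isometry commuting with $G$, giving LCP. If $G$ is conjugate to the specific group $K \subset \Og(4)$ of order $16$, LCP is verified by exhibiting an explicit element of $\Og(4)\sm\SO(4)$ commuting with a chosen generating set of $K$.

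The principal obstacle is the middle case, where some $g \in G$ has no invariant $1$-dimensional subspace. Such $g$ necessarily lies in $\SO(4)$, has eigenvalues $e^{\pm i \alpha}, e^{\pm i \beta}$ with $\alpha, \beta \in (0,\pi)$, and acts freely on $S^3$ as a double rotation. A direct calculation shows its $\Og(4)$-centralizer is already contained in $\SO(4)$, so LCP genuinely fails in this case and one must instead rule out SCP. The approach is to pass to the lens-space quotient: the free action of $\langle g \rangle$ on $S^3$ yields $L(p,q) = S^3 / \langle g \rangle$, and any $w \in \Diff(S^3)$ commuting with $g$ descends to $\bar w \in \Diff(L(p,q))$ which is orientation-reversing iff $w$ is (because $g \in \SO(4)$ preserves the orientation of $S^3$). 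Since $L(p,q)$ admits an orientation-reversing self-diffeomorphism only under the stringent condition $q^2 \equiv -1 \pmod p$, the generic case produces an immediate contradiction. In the residual self-chiral cases, the additional requirement that $w$ commute with all remaining elements of $G$ -- so that $\bar w$ is equivariant for the induced action of the relevant subquotient of $G$ on $L(p,q)$ -- must be used to exclude orientation-reversing $\bar w$. Making this last step uniform as $G$ varies through the middle case of the trichotomy is the main technical challenge.
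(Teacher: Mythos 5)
There is a genuine error in your treatment of the exceptional group $K$, and it is not a small one: $K$ does \emph{not} have LCP, so no element of $\Og(4)\sm\SO(4)$ commuting with its generators exists, and the explicit search you propose is doomed to fail. By Proposition \ref{pr:chiralRepClassifier}, LCP is equivalent to the existence of an odd-dimensional invariant subspace, and Proposition \ref{pr:KNoCommon} shows $K$ has no invariant line (hence no invariant $3$-plane either, by taking orthogonal complements). The whole reason $K$ is singled out in the trichotomy is that it falls into neither easy case: it has no common invariant line (so LCP fails) and every element of $K$ has a real eigenvalue (so the cyclic-subgroup reduction fails too). What must be proved is that $K$ lacks \emph{SCP}, and this requires a genuinely topological argument: the paper orients the two $K$-invariant round circles $S_1 = S^3\cap V_1$ and $S_2 = S^3\cap V_2$ using the $K$-action, and shows via Freedman's isotopy theorem for invariant circles that any centralizing diffeomorphism must preserve $\link(S_1,S_2)$, whereas an orientation-reversing one would negate it.

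Your lens-space approach to the middle case is also incomplete in a way you have not flagged. If $g$ has order $m$ and rotates the two invariant $2$-planes by $a_1/m$ and $a_2/m$ of a turn, the action of $\langle g\rangle$ on $S^3$ is free only when $\gcd(a_1,m)=\gcd(a_2,m)=1$; for example, $m=6$ with $(a_1,a_2)=(1,2)$ gives a $g$ with no real eigenspace whose cube fixes a circle pointwise, so the quotient is not a lens space at all. Even in the free case, the amphichiral lens spaces ($q^2\equiv -1 \bmod p$) are left unresolved in your sketch, and you cannot in general fall back on the rest of $G$: the paper's Corollary \ref{cor:redToCyclic4} shows that the cyclic group $\langle g\rangle$ \emph{by itself} already fails SCP. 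The paper achieves this uniformly with a finer invariant than the diffeomorphism type of the quotient, namely the class in $\Z/(m)$ of the linking number of the two $g$-invariant circles, oriented by the $g$-action; this is preserved by orientation-preserving conjugation (again via Freedman) and reversed by reflections, covering both the ordinary and isoclinic cases without any freeness hypothesis. Finally, note that the trichotomy you invoke from the abstract is itself the bulk of the paper's work (a case analysis over the Conway--Rastanawi classification of finite subgroups of $\Og(4)$), so it cannot simply be assumed.
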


\begin{proposition} \label{pr:equivalence}
    Conjecture \ref{conj:mainConj} holds for $\rho : \Gamma \to \Og(n)$ if and only if Conjecture \ref{conj:subgroup} holds for $\im(\rho)$
\end{proposition}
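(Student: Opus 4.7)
The plan is to prove both directions by a direct bookkeeping argument, exploiting the key parity fact that the product of two orientation-reversing elements (whether in $\Og(n)$ or in $\Diff(S^{n-1})$) is orientation-preserving. Throughout, identify $\Gamma$ with its image $G = \im(\rho) \subset \Og(n)$, as permitted by injectivity.

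For the forward direction, assume Conjecture \ref{conj:mainConj} holds for $\rho$ and suppose $G$ has SCP, witnessed by some $w \in \Diff(S^{n-1}) \sm \Diff^+(S^{n-1})$ centralizing $G$. The strategy is to cook up a companion homomorphism $\rho'$ to which Conjecture \ref{conj:mainConj} can be applied. I would fix an arbitrary $w_0 \in \Og(n) \sm \SO(n)$ and set $\rho'(\gamma) = w_0^{-1}\rho(\gamma) w_0$. Then $\rho$ and $\rho'$ are $\Og(n)$-equivalent by construction, and they are $\Diff^+(S^{n-1})$-equivalent via $w^{-1}w_0$, which is orientation-preserving and conjugates $\rho$ to $\rho'$ precisely because $w$ centralizes $G$. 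Conjecture \ref{conj:mainConj} then produces $s \in \SO(n)$ with $s^{-1}\rho s = w_0^{-1}\rho w_0$; rearranging, $sw_0^{-1}$ centralizes $G$, and since $s \in \SO(n)$ while $w_0 \notin \SO(n)$, it lies in $\Og(n) \sm \SO(n)$. Hence $G$ has LCP.

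For the reverse direction, suppose Conjecture \ref{conj:subgroup} holds for $G$, and let $\rho, \rho' : \Gamma \to \Og(n)$ be $\Og(n)$-equivalent via $g \in \Og(n)$ and $\Diff^+(S^{n-1})$-equivalent via $f \in \Diff^+(S^{n-1})$. If $g \in \SO(n)$ there is nothing to show. Otherwise, comparing the two conjugation relations $g^{-1}\rho g = f^{-1}\rho f$ yields that $fg^{-1}$ centralizes $G$, and since $f$ is orientation-preserving while $g$ is orientation-reversing, $fg^{-1} \in \Diff(S^{n-1}) \sm \Diff^+(S^{n-1})$. Thus $G$ has SCP, and applying Conjecture \ref{conj:subgroup} produces $w \in \Og(n) \sm \SO(n)$ centralizing $G$. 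The element $wg$ then lies in $\SO(n)$ (product of two orientation-reversings) and conjugates $\rho$ to $\rho'$, establishing the required $\SO(n)$-equivalence.

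The argument is essentially a translation between the two formulations, and no serious obstacle is expected. The only thing that demands real care is to ensure that at each stage the conjugating element assembled from previous pieces has the correct parity of determinant; this is the sole nontrivial content of the proof.
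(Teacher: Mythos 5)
Your proof is correct, and since the paper omits the argument entirely (remarking only that ``the proof is elementary''), yours is precisely the intended translation between the two formulations: both directions check out, including the parity bookkeeping for $w^{-1}w_0$, $sw_0^{-1}$, $fg^{-1}$, and $wg$. Nothing further is needed.
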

The proof is elementary.\benw{A version is commented out.}\lalog{Should we add the proof?}
% \begin{proof}
%    Suppose conjecture \ref{conj:subgroup} holds for $\im(\rho)$, and suppose that $\rho' : \Gamma \to \Og(n)$ is a homomorphism that is $\Og(n)$- and $\Diff^{+}$-equivalent to $\rho$. We will show that $\rho'$ is $\SO(n)$-equivalent to $\rho$. We can write
%    \[l^{-1} \rho l = s^{-1} \rho s = \rho' , \quad \text{for some $l \in \Og(4)$, $s \in \Diff^{+}(S^{n-1})$}.\]
%    If $l \in \SO(n)$, there is nothing to show, so assume $l \in \Og(n) \sm \SO(n)$. Rearrange to give
%    \[ (ls^{-1})^{-1} \rho(\gamma) (ls^{-1}) = \rho(\gamma) \quad \text{ for all $\gamma \in \Gamma$,} \]
%    which implies that $\im(\rho)$ is smoothly amphichiral. By hypothesis, it is linearly amphichiral, so we can find $e \in \Og(n) \sm \SO(n)$ such that $e^{-1}\rho e = \rho$. Then $(el)^{-1} \rho (el) = \rho'$ exhibits an $\SO(n)$-equivalence between $\rho$ and $\rho'$.

 %   Conversely, suppose Conjecture ref{conj:mainConj} holds for $\rho : \Gamma \to \Og(n)$. Write $G$ for the image of $\rho$. If $G$ is not smoothly amphichiral, then there is nothing to do. Suppose therefore that $s \in \Diff(S^{n-1}) \sm \Diff^{+}(S^{n-1})$ commutes with $\rho(\gamma)$ for all $\gamma \in \Gamma$. Let $l \in \Og(n) \sm \SO(n)$ and consider 
 %   \[ \rho': \Gamma \to \Og(n), \quad  \rho'(\gamma) = l^{-1} \rho(\gamma)  l.\]
%    Now $sl \in \Diff^{+}(S^{n-1})$ conjugates $\rho$ to $\rho'$, so Conjecture \ref{conj:mainConj} tells us that there exists $w \in \SO(n)$ that conjugates $\rho$ to $\rho'$. It follows that $ws \in \Og(n) \sm \SO(n)$ centralizes $\im(\rho)$.
% \end{proof}

Conjecture \ref{conj:subgroup} depends only on the $\Og(n)$-conjugacy class of $G$. Furthermore, Conjecture
\ref{conj:subgroup} holds for all $G$ if $n$ is odd, since $-I_n \in Z(\Og(n))$ in this case. We concentrate on the case
of even $n$ from now on.

The following is proved in \cite{Boyle2023}.\lalog{I think it's Proposition 7.4}\benw{Since the reference is still under review, we shouldn't be specific.}
\begin{proposition} \label{pr:chiralRepClassifier}
  Suppose $G \subset \Og(n)$ is a finite subgroup. The following are equivalent:
  \begin{enumerate}
  \item \label{pia1} $G$ has LCP;
  \item \label{pia4} The action of $G$ on $\R^n$ admits an odd-dimensional invariant subspace.
  \end{enumerate}
\end{proposition}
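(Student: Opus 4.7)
The plan is to handle the two directions of the equivalence by elementary linear algebra, using spectral properties of orthogonal matrices.

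For the direction \eqref{pia4} $\Rightarrow$ \eqref{pia1}, suppose $V \subset \R^n$ is an odd-dimensional $G$-invariant subspace. Because $G$ acts orthogonally, the orthogonal complement $V^\perp$ is also $G$-invariant, giving a decomposition $\R^n = V \oplus V^\perp$ as a direct sum of $G$-representations. I would then define $w \in \Og(n)$ by $w|_V = -\Id_V$ and $w|_{V^\perp} = \Id_{V^\perp}$. This $w$ is orthogonal, commutes with every element of $G$ (since it acts by a scalar on each summand and $G$ preserves each summand), and has determinant $(-1)^{\dim V} = -1$, so $w \in \Og(n)\sm\SO(n)$ witnesses LCP.

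For the direction \eqref{pia1} $\Rightarrow$ \eqref{pia4}, suppose $w \in \Og(n)\sm\SO(n)$ centralizes $G$. The key observation is that the real spectral decomposition of $w$ produces $G$-stable subspaces. Concretely, decompose
\[ \R^n = V_{+1} \oplus V_{-1} \oplus \bigoplus_{\theta \in (0,\pi)} V_\theta, \]
where $V_{\pm 1}$ are the $\pm 1$-eigenspaces of $w$ and each $V_\theta$ is the sum of all $2$-dimensional $w$-invariant subspaces on which $w$ acts by rotation by $\theta$. Because $w$ commutes with every element of $G$, each of these canonical summands is $G$-invariant. Since complex eigenvalues contribute factors of $1$ to $\det w$ and the $\pm 1$ eigenvalues contribute $(-1)^{\dim V_{-1}}$, the condition $\det w = -1$ forces $\dim V_{-1}$ to be odd. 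Thus $V_{-1}$ is an odd-dimensional $G$-invariant subspace, establishing \eqref{pia4}.

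I do not anticipate a real obstacle here; both directions are short. The only subtlety worth emphasizing is the uniqueness and $G$-invariance of the canonical $w$-isotypic summands, which is what lets one extract a $G$-stable odd-dimensional subspace rather than just an odd-dimensional real eigenspace that a priori need not be $G$-invariant. Once that is observed, the parity of $\dim V_{-1}$ is forced by the determinant and the argument is complete.
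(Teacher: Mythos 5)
Your proof is correct and complete. The paper itself does not supply a proof of this proposition --- it is deferred to \cite{Boyle2023} --- but both directions of your argument are sound and are the expected ones: for \eqref{pia4}$\Rightarrow$\eqref{pia1}, the orthogonal involution that is $-\Id$ on the odd-dimensional invariant subspace and $\Id$ on its ($G$-invariant) orthogonal complement has determinant $-1$ and centralizes $G$; for \eqref{pia1}$\Rightarrow$\eqref{pia4}, the $w$-isotypic summands are kernels of polynomials in $w$ (e.g.\ $V_{-1}=\ker(w+I)$, $V_\theta=\ker(w^2-2\cos\theta\,w+I)$) and hence $G$-invariant, and $\det w=-1$ forces $\dim V_{-1}$ to be odd, in particular nonzero.
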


\section{Cyclic groups in low dimension}
\label{sec:cyclicLow}

Since all irreducible $\R$-representations of cyclic groups are either $1$- or $2$-dimensional, Proposition
\ref{pr:chiralRepClassifier} shows that a finite cyclic subgroup $\langle g \rangle \subset \Og(n)$ has LCP if and only
if a $g$ has a real eigenspace.

The following proposition furnishes a method for proving Conjecture \ref{conj:subgroup} for cyclic groups.
\begin{proposition} \label{pr:methodProp}
    Let $n$ be an even integer. Let $A$ be the subset of $\Og(n)$ that consists of all finite-order elements $g \in \Og(n)$
    that do not have any real eigenspace. Suppose there exists a function 
    \[ i : A \to B \]
    where $B$ is some set, such that:
    \begin{enumerate}
    \item\label{i:mp1} if there exists $d \in \Diff^+(S^3)$ so that $d^{-1} g d \in A$, then $i(d^{-1} g d) = i(g)$;
    \item \label{i:mp2} $i$ is not invariant under conjugation by some element $l \in \Og(n) \sm \SO(n)$.
    \end{enumerate}
    Then Conjecture \ref{conj:subgroup} holds for all finite cyclic subgroups of $\Og(n)$.
\end{proposition}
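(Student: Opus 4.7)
The plan is a direct contradiction argument. Suppose $G = \langle g \rangle$ is a finite cyclic subgroup of $\Og(n)$ that has SCP but fails to have LCP. Every irreducible real representation of a cyclic group has dimension $1$ or $2$, so by Proposition \ref{pr:chiralRepClassifier} the failure of LCP forces $g$ to have no real eigenspace. Together with $g$ being of finite order, this places $g \in A$.

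SCP supplies some $w \in \Diff(S^{n-1}) \sm \Diff^+(S^{n-1})$ with $w^{-1} g w = g$. Let $l \in \Og(n) \sm \SO(n)$ be as in hypothesis \ref{i:mp2}. Both $l$ and $w$ are orientation-reversing, so the composite $wl$ lies in $\Diff^+(S^{n-1})$. Using $w^{-1} g w = g$, one computes
\[
(wl)^{-1} g (wl) \;=\; l^{-1} w^{-1} g w l \;=\; l^{-1} g l,
\]
and this element lies in $A$, since conjugation by $l \in \Og(n)$ preserves both finite order and the absence of real eigenspaces. Hypothesis \ref{i:mp1} applied with $d = wl$ then yields $i(l^{-1} g l) = i(g)$, which contradicts hypothesis \ref{i:mp2}.

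The conceptual heart of the argument is the pairing of two orientation-reversing elements: individually, neither $w$ nor $l$ lies in $\Diff^+(S^{n-1})$, but their composite $wl$ does, and this is exactly what allows hypothesis \ref{i:mp1} to be deployed on an input that was originally furnished by SCP. The present proposition is essentially a bookkeeping device and I do not expect any serious obstacle in its proof; the genuine work, postponed to later sections, will be the explicit construction of an invariant $i$ satisfying both \ref{i:mp1} and \ref{i:mp2} for the dimensions of interest (in particular $n = 4$).
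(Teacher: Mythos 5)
Your proposal is correct and follows essentially the same route as the paper: both arguments reduce the failure of LCP to $g \in A$ via Proposition \ref{pr:chiralRepClassifier}, and both exploit the fact that an orientation-reversing centralizing element differs from $l$ by an element of $\Diff^+(S^{n-1})$, so that hypotheses \ref{i:mp1} and \ref{i:mp2} combine to give the contradiction. The only difference is presentational (you argue by contradiction from an assumed SCP witness, the paper argues directly that no orientation-reversing $w$ can centralize $g$), which is immaterial.
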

\begin{proof}
Suppose $G=\langle g \rangle \subset \Og(n)$ is a finite cyclic subgroup. If this group has LCP, then there is nothing
to do.

Suppose it does not have LCP, so that $g \in A$. The conditions on $i$ imply that $i(w^{-1}gw) \neq i(g)$ whenever $w \in \Diff(S^{n-1})
\sm \Diff^{+}(S^{n-1})$ is such that $w^{-1} g w \in A$, since any such $w$ lies in $l \Diff^{+}(S^{n-1})$. Therefore,
$w^{-1}g w \neq g$ for all $w \in \Diff(S^{n-1}) \sm \Diff^+(S^{n-1})$.
\end{proof}

\begin{proposition} \label{pr:Cyclicn2}
    Conjecture \ref{conj:subgroup} holds for cyclic groups when $n=2$. 
\end{proposition}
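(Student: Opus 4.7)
The plan is to apply Proposition \ref{pr:methodProp} with a simple combinatorial invariant drawn from the motivating example in the introduction.

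First I would reduce the problem. A finite-order element of $\Og(2)$ is either a reflection, the identity, or $-I$, or a nontrivial rotation. In the first three cases $g$ has a real eigenvector, so $\langle g \rangle$ admits a $1$-dimensional invariant subspace and hence has LCP by Proposition \ref{pr:chiralRepClassifier}, so there is nothing to prove. Thus the set $A$ of Proposition \ref{pr:methodProp} consists, in the case $n=2$, of the rotations of order $m \ge 3$. For such a rotation $g$, of angle $2\pi j/m$ with $\gcd(j,m)=1$, and for any $p \in S^1$, the orbit $\{p, gp, \ldots, g^{m-1}p\}$ has $m$ distinct points cyclically arranged around $S^1$. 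Define $i(g) \in \{1, 2, \ldots, m-1\}$ to be the unique integer $k$ such that $g^k \cdot p$ is the first orbit point encountered when traversing $S^1$ from $p$ in the positive direction. Because $g$ acts by rotation, the value of $k$ is independent of $p$; concretely $i(g) = j^{-1} \bmod m$, so in particular $\gcd(i(g),m)=1$.

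Next I would verify the two conditions of Proposition \ref{pr:methodProp}. For (\ref{i:mp1}), let $d \in \Diff^+(S^1)$ and assume $d^{-1}gd \in A$ (which holds automatically if $g \in A$, since conjugation preserves the order and the absence of fixed points on $S^1$). For a point $q \in S^1$, the orbit of $q$ under $d^{-1}gd$ is the image under $d^{-1}$ of the orbit of $d(q)$ under $g$. Since $d^{-1}$ is an orientation-preserving homeomorphism of $S^1$, it preserves the cyclic order of finite subsets. Thus the first orbit point after $q$ equals $d^{-1}(g^k d(q)) = (d^{-1}gd)^k(q)$, where $k = i(g)$, giving $i(d^{-1}gd) = i(g)$. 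For (\ref{i:mp2}), take the reflection $l = \begin{bmatrix} -1 & 0 \\ 0 & 1 \end{bmatrix} \in \Og(2) \sm \SO(2)$. Then $l^{-1} g l = g^{-1}$, which is rotation by $-2\pi j/m$, so $i(l^{-1}gl) = (-j)^{-1} \equiv m - i(g) \pmod m$. Since $\gcd(i(g),m)=1$ and $m \ge 3$, the equality $i(g) = m - i(g)$ would force $i(g)=m/2$, incompatible with $\gcd(i(g),m)=1$; hence $i(l^{-1}gl)\neq i(g)$.

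With both hypotheses verified, Proposition \ref{pr:methodProp} immediately yields Conjecture \ref{conj:subgroup} for all finite cyclic subgroups of $\Og(2)$. There is no serious obstacle here: everything reduces to the easily-checked fact that an orientation-preserving self-diffeomorphism of $S^1$ preserves the cyclic order on any finite subset, while an orientation-reversing one reverses it, combined with the arithmetic observation $\gcd(i(g),m)=1$ that rules out the fixed point $i(g)=m/2$ of the reversal map.
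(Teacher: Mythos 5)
Your proof is correct and follows essentially the same route as the paper: reduce to rotations of order $m\ge 3$, extract a combinatorial invariant from the cyclic order of an orbit on the oriented circle, and feed it into Proposition \ref{pr:methodProp}. The only cosmetic difference is that you record just the first orbit point encountered (the value $j^{-1}\bmod m$) rather than the full permutation used in the paper, and your explicit check that $i(g)\neq m-i(g)$ when $\gcd(i(g),m)=1$ and $m\ge 3$ is a slightly more careful verification of Hypothesis \ref{i:mp2} than the paper's appeal to the permutation being ``reversed.''
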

\begin{proof}
    We construct an invariant $i$ as in Proposition \ref{pr:methodProp}. If $f \in \Og(2)$ is of finite order but has no
    real eigenspace, then $f$ is a rotation of order $m \ge 3$.

    The construction is a generalization of one in the introduction. Mark a point $p \in S^1$. The orbit
    \[ \{p, f(p), f^2(p), \dots, f^{m-1}(p)\}\] can be ordered by starting at $p$ and proceeding around $S^1$ in the
    positive direction. We obtain an ordered list $$(p, f^{a_1}(p) , f^{a_2}(p), \dots, f^{a_{m-1}}(p))$$ of points on
    the circle, where $(a_1, \dots, a_m)$ is a permutation of $1, \dots, m-1$. It is elementary to verify that this
    permutation of $(1,\dots, m-1)$ is invariant under $\Diff^+(S^1)$-conjugation, but is reversed by conjugation by an
    element of $\Og(2) \sm \SO(2)$.  We therefore have an invariant function $i$ taking values in $B=S_m$, the symmetric
    group on $m$ letters satisfying the conditions of Proposition \ref{pr:Cyclicn2}. This completes the proof.
\end{proof}

We derive the following corollary.
\begin{theorem} \label{th:con2}
  Conjecture \ref{conj:subgroup} holds when $n=2$. 
\end{theorem}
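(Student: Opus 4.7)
The plan is to reduce the dihedral case to the cyclic case handled by Proposition \ref{pr:Cyclicn2}. First I would invoke the standard classification: every finite subgroup of $\Og(2)$ is either a cyclic group $C_m = \langle R_{2\pi/m} \rangle$ of rotations, or a dihedral group $D_m$ of order $2m$ containing reflections. The cyclic case is exactly Proposition \ref{pr:Cyclicn2}, so only dihedral subgroups require attention.

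Let $D_m \subset \Og(2)$ be a dihedral subgroup. I would split according to whether $D_m$ has LCP, using Proposition \ref{pr:chiralRepClassifier}: namely, $D_m$ has LCP iff its action on $\R^2$ preserves a $1$-dimensional subspace. When $m \le 2$, the dihedral group is generated by (at most) $-I$ and a reflection, and the fixed axis of the reflection is preserved by $-I$ as well, so there is a common invariant line; hence $D_m$ has LCP and the conjecture holds trivially. When $m \ge 3$, the rotation $R_{2\pi/m}$ has no real eigenvalue, so there is no common invariant line and $D_m$ fails to have LCP.

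The substantive step is the case $m \ge 3$, where I must show $D_m$ also fails to have SCP. The cyclic subgroup $\langle R_{2\pi/m}\rangle \cong C_m \subseteq D_m$ has no LCP (it has no real eigenspace), so by Proposition \ref{pr:Cyclicn2} it has no SCP either. Since the centralizer of $D_m$ in $\Diff(S^1)$ is contained in the centralizer of $\langle R_{2\pi/m}\rangle$, which in turn lies in $\Diff^{+}(S^1)$, we conclude $D_m$ has no SCP. Thus the conjecture holds vacuously (no hypothesis) for $D_m$ with $m\ge 3$, completing the proof.

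No step poses a serious obstacle: the whole theorem is a short deduction from Proposition \ref{pr:Cyclicn2} together with the classification of finite subgroups of $\Og(2)$ and the observation that SCP is inherited by overgroups only in the sense that its failure for a subgroup forces its failure for the ambient group.
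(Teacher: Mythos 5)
Your proof is correct and follows essentially the same route as the paper: split into cyclic and dihedral subgroups, dispose of the small dihedral groups via a common invariant line (hence LCP), and for $m \ge 3$ use the rotation subgroup together with Proposition \ref{pr:Cyclicn2} and the fact that failure of SCP for a subgroup forces its failure for the ambient group. No substantive difference from the paper's argument.
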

\begin{proof}
  If $\Gamma \subset \Og(2)$ is a finite group, then $\Gamma$ is either cyclic, acting on $S^1$ by rotations, or
  dihedral, acting by rotations and at least one reflection. We established the cyclic case in Proposition \ref{pr:Cyclicn2}.

  If $\Gamma$ is a dihedral group of order $4$, then it is generated by a rotation by $\pi$ and a reflection, $r$. The axis
  of reflection of $r$ furnishes a $1$-dimensional invariant subspace, so $\Gamma$ has LCP.

  If $\Gamma$ is a dihedral group of order $2m \ge 6$, then it contains a cyclic group of order $m \ge 3$. This cyclic
  group $\Gamma'$ is generated by a rotation by $2\pi/m$, which does not have an eigenspace, and therefore $\Gamma'$
  does not have SCP, by \ref{pr:Cyclicn2}, and neither does $\Gamma$.
\end{proof}

\begin{proposition}\label{pr:Cyclicn4}
     Conjecture \ref{conj:subgroup} holds for cyclic groups when $n=4$.
\end{proposition}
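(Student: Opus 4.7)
The plan is to apply Proposition \ref{pr:methodProp}. After $\Og(4)$-conjugation, any $g\in A$ of order $m$ can be written in the form $g = R(2\pi a/m)\oplus R(2\pi b/m)$, where $a,b$ are integers with $\gcd(a,b,m)=1$ and neither $a$ nor $b$ congruent to $0$ or $m/2$ modulo $m$; these conditions encode the absence of a real eigenspace and force $m\ge 3$. I define $i(g)$ to be the equivalence class of the pair $(S^3/\langle g\rangle,\ [g])$ under orientation-preserving orbifold diffeomorphisms preserving the marking, where $[g]$ denotes the image of $g$ in the orbifold fundamental group.

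Condition~(1) of Proposition \ref{pr:methodProp} holds by construction, since any $d\in\Diff^+(S^3)$ with $g'=dgd^{-1}$ descends to an orientation-preserving orbifold diffeomorphism $(S^3/\langle g\rangle,\ [g])\to(S^3/\langle g'\rangle,\ [g'])$. For condition~(2), take $l=\diag{-1,1,1,1}\in\Og(4)\sm\SO(4)$. Then $l$ descends to an orientation-\emph{reversing} equivalence between the pairs attached to $g$ and $lgl^{-1}$, so an equality $i(g)=i(lgl^{-1})$ would produce an orientation-reversing self-equivalence of $(S^3/\langle g\rangle,\ [g])$ fixing the marking. The heart of the proof is to rule out any such self-equivalence.

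In the free case, $\gcd(a,m)=\gcd(b,m)=1$, the quotient is a lens space $L(m,q)$ with $q\equiv a^{-1}b\pmod m$, and the classification of self-diffeomorphisms of lens spaces (due to Bonahon and to Hodgson--Rubinstein) states that for $m\ge 3$ the natural map $\pi_0\Diff(L(m,q))\to\Aut(\pi_1)=(\Z/m)^*$ is injective. Consequently, any self-diffeomorphism acting trivially on $\pi_1$ is isotopic to the identity and hence orientation-preserving, which rules out the forbidden self-equivalence. When $g$ does not act freely, $S^3/\langle g\rangle$ is a Seifert-fibered orbifold whose singular locus consists of the images of the axial circles $C_i=V_i\cap S^3$ (where $V_i$ is the invariant $2$-plane of the $i$th rotation factor, and $C_i$ is singular precisely when $\gcd(a_i,m)>1$); an analogous faithfulness result for the orbifold mapping class group then completes the argument.

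The main obstacle lies in the non-free case: either one must extend the lens-space mapping class group theorem to the Seifert-fibered orbifold setting, or one bypasses this by invoking equivariant smoothing results (in the spirit of the resolved Smith conjecture and of Hatcher's theorem on $\Diff(S^3)$) to show that every smooth centralizer of $g$ is isotopic through $g$-equivariant diffeomorphisms to an element of the $\Og(4)$-centralizer of $g$. Since $\langle g\rangle$ fails LCP by assumption, that linear centralizer is contained in $\SO(4)$ (it is essentially $\SO(2)\times\SO(2)$, possibly extended by a block-swap when $\theta_1=\pm\theta_2$), and so every smooth centralizer must be orientation-preserving.
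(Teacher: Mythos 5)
Your strategy fits the template of Proposition \ref{pr:methodProp}, and your free case is essentially correct: when $\gcd(a,m)=\gcd(b,m)=1$ the quotient is a lens space $L(m,q)$ with the marked generator of $\pi_1$, and an orientation-reversing self-diffeomorphism must act on $H_1=\Z/m$ by a unit $u$ with $u^2\equiv-1\pmod m$ (this is already forced by the torsion linking form, without the full Bonahon--Hodgson--Rubinstein computation), so no orientation-reversing self-equivalence can fix the marking once $m\ge 3$. The genuine gap is the non-free case, and it is not a corner case: for example $g=R(2\pi\cdot 3/12)\oplus R(2\pi\cdot 4/12)$ lies in $A$ (neither $3$ nor $4$ is congruent to $0$ or $6$ mod $12$), yet $g^{4}$ and $g^{3}$ each fix a circle pointwise, so $S^3/\langle g\rangle$ is an orbifold with two singular circles rather than a lens space. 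You defer to ``an analogous faithfulness result for the orbifold mapping class group,'' but no such result is stated or proved, and that is exactly where the content of the proposition lives for these elements. Your proposed bypass --- that every element of the smooth centralizer of $g$ is $g$-equivariantly isotopic into the $\Og(4)$-centralizer --- is a linearization statement strictly stronger than the proposition itself; it does not follow from the Smith conjecture (about fixed-point sets of finite-order diffeomorphisms) or from Hatcher's theorem (about $\Diff(S^3)$, not about centralizers of a fixed linear $g$), so invoking it assumes the conclusion.

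For contrast, the paper never passes to the quotient: it takes the two $g$-invariant round circles $S_1,S_2$ (unit circles of the invariant $2$-planes), orients each by the $g$-action, and uses the class of $\link(S_1,S_2)$ in $\Z/(m)$ as the invariant $i$. A reflection of $S^3$ reverses the sign, giving condition~(\ref{i:mp2}); condition~(\ref{i:mp1}) follows from the theorem of \cite{Freedman1995} that a $g$-invariant smooth unknotted circle is isotopic through $g$-invariant circles to a round one, together with a homology argument showing such isotopies change linking numbers only by multiples of $m$. That argument treats the free, non-free, and isoclinic cases uniformly. If you wish to salvage your route, the most promising repair is to extract exactly this linking data (e.g.\ of the singular or exceptional circles) from your orbifold, but as written the non-free case is unproven.
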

This is proved in \cite[Proposition 7.8]{Boyle2023}, by an argument that follows the pattern of Proposition
\ref{pr:methodProp}. Since that work has not yet been published, we sketch this argument here. The set $A$ consists of
the finite-order elements of $\Og(n)$ without real eigenspaces. These are double rotations. For a given $g \in A$ of
order $m$, the space $\R^4$ may be decomposed as an orthogonal direct sum $V_1 \perp V_2$ of $2$-planes such that $g$
acts on $V_1$ by rotation by $a_1/m$-turn and on $V_2$ by rotation by $a_2/m$-turn, where
$a_1,a_2 \in \{0, \dots, m-1\}$, Since $g$ has no real eigenspaces, $a_1,a_2 \neq 0$ and $a_1,a_2 \neq m/2$.

We distinguish two cases: the ordinary case, where $a_1 \not \equiv \pm a_2 \pmod m$, and the isoclinic case, where $a_1\equiv \pm a_2 \pmod m$. 

We construct an invariant function $i$ in the ordinary case first. For an ordinary double-rotation $g$, the decomposition of $\R^4$ into $V_1
\perp V_2$ is unique. Define $2$
$g$-invariant round circles $S_1$ and $S_2$ in $S^3$ on which $g$ acts by an $a_1/m$ -turn and by a $a_2/m$-turn respectively: choose an orthogonal
$V_1 \perp V_2 = \R^4$ as above, and take $S_1$ and $S_2$ to be the unit circles in $V_1$ and $V_2$ respectively. Orient
$S_1$ and $S_2$ by means of the $g$-action: the positive direction on $S_1$ (resp.~$S_2$) is the direction from a marked
point $p$ to $g(p)$ along $S_1$ (resp.~$S_2$) that passes through as few of the points in the $g$-orbit of $p$ as
possible. With these orientations, $\link(S_1, S_2)$ is either $+1$ or $-1$. We claim that the class of
$\link(S_1, S_2) \in \Z/(m)$ is an invariant meeting the hypotheses of Proposition \ref{pr:methodProp}. Certainly, it is
changed by a reflection of $S^3$, which changes the orientation of the ambient space, so Hypothesis \ref{i:mp2} is satisfied.

To see that Hypothesis \ref{i:mp1} of Proposition \ref{pr:methodProp} holds, argue as follows: if $g$
is conjugate to $g' = d^{-1} g d \in A$ where $d \in \Diff^+(S^3)$, then we consider the corresponding round circles
$S'_1$ and $S'_2$ for $g'$. Then $dS'_1$ is a $g$-invariant smooth unknotted circle on which $g$ acts as an
$a/m$-turn. The main theorem of \cite{Freedman1995} says that $dS_1'$ is isotopic through $g$-invariant circles to a
round circle, which must be $S_1$ in this case, since $g$ acts by $a/m$-turn on each of the circles in the
isotopy. Similarly, $dS'_2$ is isotopic through $g$-invariant circles to $S_2$. An elementary homology argument shows
that the isotopies through $g$-invariant circles change the linking numbers by multiples of $m$ only. Therefore
\[ \link(S_1, S_2) \equiv \link(dS_1', dS_2') \equiv \link(S_1',S_2') \pmod m. \]

\smallskip

We now extend the definition of $i$ to all of $A$, including the isotropic rotations. For an isotropic rotation $g$,
there are infinitely many orthogonal decompositions $\R^4 = V_1 \perp V_2$ into $g$-invariant $2$-planes. It is possible
to define disjoint $g$-invariant round circles $S_1$ and $S_2$ as before, but they are no longer unique. Nonetheless,
for any choices $(S_1,S_2)$ and $(S_1'', S_2'')$ of such circles, one can take $S_1$ to $S_1''$ and $S_2$ to $S_2''$ by smooth
$g$-invariant isotopies. The action of $g$ endows any choice of $S_1$ and $S_2$ with orientations, so that the linking
number $\link(S_1, S_2)$ is defined, and consideration of isotopies shows it is independent of the precise choice of
$(S_1, S_2)$. We again claim that the class of $\link(S_1, S_2)$ in $\ZZ/(m)$ satisfies the hypotheses of
\ref{pr:methodProp}. The same argument as before shows that Hypothesis \ref{i:mp2} is satisfied.

To see that Hypothesis \ref{i:mp1} of Proposition \ref{pr:methodProp} holds, argue as follows: if $g$
is conjugate to $g' = d^{-1} g d \in A$ where $d \in \Diff^+(S^3)$, then we consider the corresponding round circles
$S'_1$ and $S'_2$ for $g'$. Then $dS'_1$ is a $g$-invariant smooth unknotted circle on which $g$ acts as an
$a_1/m$-turn. The main theorem of \cite{Freedman1995} says that $dS_1'$ is isotopic through $g$-invariant circles to a
round circle, and that round circle is in turn isotopic through $g$-invariant circles to $S_1$. Similarly, $dS'_2$ is
isotopic through $g$-invariant circles to $S_2$. The same elementary homology argument as before
that the isotopies through $g$-invariant circles change the linking numbers by multiples of $m$ only. Therefore
\[ \link(S_1, S_2) \equiv \link(dS_1', dS_2') \equiv \link(S_1',S_2') \pmod m. \]
In all cases, the class of $\link(S_1, S_2)$ in $\ZZ/(m)$ furnishes the required invariant $i$. 

\begin{corollary} \label{cor:redToCyclic4}
   If $G < \Og(4)$ is a finite group and $g \in G$ does not have a $1$-dimensional invariant subspace, then $G$ does not
   have SCP.
 \end{corollary}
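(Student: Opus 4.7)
The plan is to reduce the statement to the cyclic case already handled by Proposition \ref{pr:Cyclicn4}, using the fact that SCP passes to subgroups. First I would rephrase the hypothesis in terms of the invariant subspace criterion of Proposition \ref{pr:chiralRepClassifier}. In $\R^4$ the only odd dimensions available for a proper invariant subspace are $1$ and $3$, and a $3$-dimensional $g$-invariant subspace has a $1$-dimensional $g$-invariant orthogonal complement. Hence the hypothesis that $g$ admits no $1$-dimensional invariant subspace is equivalent to $g$ admitting no odd-dimensional invariant subspace at all.

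Applying Proposition \ref{pr:chiralRepClassifier} to the finite cyclic group $\langle g \rangle \subset \Og(4)$, I conclude that $\langle g \rangle$ does not have LCP. Proposition \ref{pr:Cyclicn4} establishes Conjecture \ref{conj:subgroup} for all finite cyclic subgroups of $\Og(4)$, so the failure of LCP forces the failure of SCP for $\langle g \rangle$.

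Finally, since SCP is inherited by subgroups (as noted immediately after the statement of Conjecture \ref{conj:subgroup}), the containment $\langle g \rangle \subset G$ implies that if $G$ had SCP then $\langle g \rangle$ would as well, contradicting the previous step. Therefore $G$ does not have SCP.

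There is no real obstacle here; the corollary is essentially a packaging of Propositions \ref{pr:chiralRepClassifier} and \ref{pr:Cyclicn4} together with the subgroup-heredity of SCP. The only point that requires any thought is the dimension-$4$ observation that rules out odd-dimensional invariant subspaces of dimension greater than $1$, which is handled by orthogonal complementation.
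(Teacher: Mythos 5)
Your proof is correct and follows essentially the same route as the paper: pass from ``no $1$-dimensional invariant subspace'' to ``no odd-dimensional invariant subspace'' (the paper asserts this step; you justify it via orthogonal complementation of a putative $3$-dimensional invariant subspace), apply Proposition \ref{pr:chiralRepClassifier} and Proposition \ref{pr:Cyclicn4} to conclude $\langle g \rangle$ lacks SCP, and finish by heredity of SCP to subgroups. No differences of substance.
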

 \begin{proof}
   If $g$ does not have a $1$-dimensional invariant subspace then it does not have an odd-dimensional invariant
   subspace, and so $\langle g \rangle$ does not have LCP, by Proposition \ref{pr:chiralRepClassifier}. Proposition \ref{pr:Cyclicn4} says
   $\langle g \rangle$ does not have SCP. We conclude that the larger group $G$ also does not have SCP.
 \end{proof}

% \section{Restriction to subgroups}
% \label{sec:restriction}

% With the result for finite cyclic subgroups of $\Og(n)$ in hand, we state an elementary result that underpins the rest of the argument.

% \begin{proposition}
%     \label{cor:cyclicReduction}
%     Let $G$ be a finite subgroup of $\Og(n)$ where $n=2$ or $4$, and suppose that there exists some $g \in G$ for which $g \in \Og(n)$ has no real eigenspaces. Then Conjecture \ref{conj:subgroup} holds for $G$.
% \end{proposition} 
% \begin{proof}
%     Since $g$ has no real eigenspaces, $\langle g \rangle$ does not have the LCP, by Proposition
%     \ref{pr:chiralRepClassifier}. Proposition \ref{pr:Cyclicn2} or Proposition \ref{pr:Cyclicn4} implies that $\langle g
%     \rangle$ does not have SCP, which implies that $G$ does not have SCP. This establishes Conjecture \ref{conj:subgroup} for $G$.
% \end{proof}

\section{Quaternion geometry}
\label{sec:geometry}

We adopt the notation of \cite{Rastanawi2022}, which is similar to that of \cite{Conway2003}. Identify $\R^4 = \HH$. If $z,w \in \HH$ are quaternions of unit length, then $[z,w] \in \SO(4)$ is the transformation $ x \mapsto \bar z x w$, and $\ast[z.w] \in \Og(4)$ is the transformation $ x \mapsto \bar z \bar x w$, which is a reflection.
It is easy to verify that $[z_1, w_1] = [z_2, w_2]$ if and only if $(z_1, w_1) = \pm (z_2, w_2)$.

Composition of pairs $[z_1, w_1] [z_2, w_2]$ is carried out left to right, which ensures that $([z_1, w_1] [z_2, w_2])x  = [z_1z_2, w_1w_2]x$.

It is useful to know when a rotation $[z,w]$ has no invariant
$1$-dimensional subspace, or equivalently, when $[z,w]$ has no real eigenspace.

\begin{proposition} \label{pr:quaternionCondition}
    The element $[z,y] \in \SO(4)$ has no invariant $1$-dimensional subspace if and only if $z$ is not conjugate in $\HH^\times$ to either $y$ or $-y$.
\end{proposition}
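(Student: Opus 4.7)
The plan is to translate the geometric condition on $[z,y]$ into an algebraic eigenvalue equation in $\HH$. Because $[z,y]$ is an orthogonal transformation of $\R^4$, its only possible real eigenvalues are $\pm 1$, and an invariant $1$-dimensional subspace is the same data as a real eigenline. So the assertion reduces to showing that the equation
\[ \bar z \, x \, y \;=\; \lambda x \]
admits a nonzero solution $x \in \HH$ for some $\lambda \in \{+1,-1\}$ if and only if $z$ is $\HH^\times$-conjugate to $y$ or to $-y$.

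For the forward direction I would exploit the fact that any nonzero quaternion is a unit of $\HH$. Multiplying the eigenvalue equation on the left by $z$ and using $|z|=1$ rearranges it to $xy = \lambda z x$, that is, $z = \lambda x y x^{-1}$. With $\lambda = +1$ this exhibits $z$ as conjugate to $y$; with $\lambda = -1$, as conjugate to $-y$. For the converse, given $z = \lambda x y x^{-1}$ with $x \in \HH^\times$ and $\lambda \in \{\pm 1\}$, one has $zx = \lambda xy$, whence
\[ \bar z \, x \, y \;=\; \lambda\, \bar z\, z\, x \;=\; \lambda |z|^2 x \;=\; \lambda x, \]
so $\R x$ is an invariant $1$-dimensional subspace.

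There is no substantive obstacle in the argument. The only items needing attention are the left/right multiplication convention in the definition $[z,y]\colon x \mapsto \bar z x y$ and the identification of invariant lines with $\pm 1$-eigenlines forced by orthogonality. Both are immediate, and the two directions combine to give the stated equivalence.
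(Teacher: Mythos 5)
Your proof is correct and follows essentially the same route as the paper's: both reduce the existence of an invariant line to the eigenvalue equation $\bar z x y = \pm x$ for $x \in \HH^\times$ and rearrange it to exhibit $z$ as a conjugate of $\pm y$. The paper's version is simply terser, leaving the orthogonality observation (that real eigenvalues must be $\pm 1$) implicit.
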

\begin{proof}
    The transformation $[z,y]$ has an invariant $1$-dimensional subspace if and only if the equation $[z,y] (x)= \pm x $
    holds for some $x \in \HH^\times$. This equation is equivalent to
    $z = \pm x^{-1} y x$.
\end{proof}

\begin{remark} \label{rem:sign}
    If $z$ and $y$ are conjugate quaternions, then 
    \[ \Re(y) = \frac{1}{2} (y + \bar y) = \frac{1}{2}(z+\bar z) = \Re(z).\]
\end{remark}

\begin{notation}
    Beyond $1$, $i$, $j$ and $k$, certain unit quaternions have conventional names:
    \begin{align*}
        \w &= \frac{1}{2}(-1 + i + j + k) \quad \text{(of order $3$);} \\
        i_O &= \frac{1}{\sqrt{2}}(j+k) \quad \text{(of order $4$);} \\
        i_I &= \frac{1}{2}\Big(i +\frac{\sqrt{5}-1}{2} j + \frac{\sqrt{5}+1}{2} k\Big)\quad \text{(of order $4$).}
    \end{align*}
\end{notation}

\section{Analysis of cases}
\label{sec:cases}

Our approach to Conjecture \ref{conj:subgroup} for $n=4$ is to analyze cases. Almost all the groups we consider fall into two easy-to-handle cases. There is one exception.

\begin{notation}
    Let $K \subset \Og(4)$ denote the group generated by $k_1=*[i,i][i,1]$ and $k_2=*[k,k][i,1]$.
\end{notation}

This group is denoted $\boxplus^{\mathbf{p2gg}}_{2,2}$ by \cite{Rastanawi2022}. It is a group of order $16$, abstractly
isomorphic to the nontrivial semidirect product $C_4 \rtimes C_4$. We will establish in Proposition \ref{pr:KsmChiral}
that $K$ does not have SCP.

\begin{theorem} \label{th:subgroup}
    Let $G$ be a finite subgroup of $\Og(4)$. Exactly one of the following is true:
    \begin{itemize}
        \item There is a $1$-dimensional $G$-invariant subspace of $\R^4$.
        \item There is some element $g \in G$ that has no $1$-dimensional invariant subspace.
        \item $G$ is conjugate in $\Og(4)$ to the group $K$.
    \end{itemize}
\end{theorem}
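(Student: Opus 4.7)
The three conditions are pairwise mutually exclusive: (a) supplies a single line fixed by every element of $G$, which is incompatible with (b); and a separate direct computation with the generators $k_1, k_2$ (the content of Proposition \ref{pr:KsmChiral} and a companion check) shows that $K$ itself has no common invariant line yet every element of $K$ has an invariant line, so (c) is incompatible with both (a) and (b). The substantive content of the theorem is therefore that any finite subgroup $G \subset \Og(4)$ that satisfies neither (a) nor (b) is $\Og(4)$-conjugate to $K$; that is what I would aim to prove.

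The first step is to reduce the hypothesis to a constraint purely on $G^+ := G \cap \SO(4)$. Any improper orthogonal transformation $g \in \Og(4) \sm \SO(4)$ has $\det g = -1$; its non-real eigenvalues occur in complex conjugate pairs whose product is $+1$, so at least one real eigenvalue $\pm 1$ must appear, giving an invariant line automatically. Hence the hypothesis "every element of $G$ has an invariant line" restricts only the rotations in $G$, and by Proposition \ref{pr:quaternionCondition} together with Remark \ref{rem:sign}, each rotation $[z,w] \in G^+$ must satisfy $\Re(z) = \pm \Re(w)$, i.e., $z$ and $w$ lie on the same or antipodal level sets of $\Re$ on the unit sphere of $\HH$.

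Next I would invoke the classification of finite subgroups of $\Og(4)$ as organised in \cite{Rastanawi2022}, which lists such groups by the preimages in $\Sp(1) \times \Sp(1)$ of $G^+$ together with the "decoration" data specifying how the improper elements are attached. For each entry of the classification I would perform two tests: first, whether every rotation $[z,w]$ in that entry satisfies $\Re(z) = \pm \Re(w)$ (if not, that entry contains an element realising case (b) and is accounted for by Corollary \ref{cor:redToCyclic4}); second, whether the group admits a common $1$-dimensional invariant subspace in $\R^4$ (if so, case (a) holds). The groups that survive both tests are the candidates for case (c), and the objective is to show these are exactly the conjugates of $K$.

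The main obstacle is executing the case analysis through the Rastanawi list, which is long. To cut the work, I would exploit the parity constraint $\Re(z) = \pm \Re(w)$ to prune families by matching the possible orders of elements on the two $\Sp(1)$-sides; the constraint is stringent enough that the surviving candidates sit in groups whose left and right projections have small order (essentially at most $4$), which is exactly the regime occupied by $K \cong C_4 \rtimes C_4$. The delicate point is distinguishing subgroups of $K$ (and near-relatives) from $K$ itself: one must verify that any proper subgroup of $K$, and any sibling group produced by the classification with the same left/right data but a different improper decoration, either inherits a common invariant line (falling into (a)) or contains a rotation violating the pairing $\Re(z) = \pm \Re(w)$ (falling into (b)), so that $K$ alone populates case (c).
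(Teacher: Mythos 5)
Your strategy coincides with the paper's: check that the three alternatives are pairwise exclusive, reduce to showing that a finite $G\subset\Og(4)$ satisfying neither (a) nor (b) is conjugate to $K$, and then work through the Conway--Smith/Rastanawi--Ziegler classification entry by entry, testing each group with the criterion of Proposition \ref{pr:quaternionCondition} (your condition $\Re(z)=\pm\Re(w)$ is equivalent to it, since conjugacy classes of unit quaternions in $\HH^\times$ are exactly the level sets of the real part) and with the existence of a common invariant line. Your preliminary reductions are correct, and one of them is a little sharper than anything the paper states explicitly: every element of $\Og(4)\sm\SO(4)$ has a real eigenvalue because $\det=-1$ forces one, so only the rotations in $G$ are constrained by (b). (Minor point: exclusivity of (c) against (a) and (b) rests on Propositions \ref{pr:KAllRealEig} and \ref{pr:KNoCommon}, not on Proposition \ref{pr:KsmChiral}, which concerns SCP.)

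The gap is that the case analysis \emph{is} the proof --- it occupies all of Sections \ref{sec:nontoroidal} and \ref{sec:toroidal} --- and you have not carried it out, substituting for it the claim that the constraint $\Re(z)=\pm\Re(w)$ prunes the classification down to groups whose left and right projections have order at most $4$. That claim is not a consequence of the constraint alone: there are infinite families all of whose rotations satisfy it (every axial group, every group with a common invariant line, the cyclic groups $\langle[e^{i\theta},e^{i\theta}]\rangle$ of arbitrary order), and these are eliminated only because they land in case (a), which must be checked separately for each family. The actual work the paper does, and which your outline omits, is: verifying that every tubical group and almost every polyhedral group contains $[i,1]$ or $[\w,1]$; handling the exceptional polyhedral groups $\pm\frac{1}{60}[I\times\bar I]$ and $+\frac{1}{60}[I\times\bar I]$ by multiplying generators and comparing traces; and splitting the toroidal groups into eight families by directional part, where the only groups surviving both tests occur among the full torus reflection groups with $m=2$ and $n\in\{1,2\}$ --- eight explicit groups, seven of which preserve the line $j\R$ and the eighth of which is $K$. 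Without executing this, the ``small projections'' heuristic is a description of the answer rather than an argument for it, and the delicate final step you correctly flag (showing that $K$'s siblings with different improper decorations all fall into (a) or (b)) is precisely where the paper has to exhibit the invariant line $j\R$ by direct computation.
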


In the first case, $G$ has LCP by Proposition \ref{pr:chiralRepClassifier}. In the second, $G$ does not have SCP by Corollary
\ref{cor:redToCyclic4} and in the third, $G$ does not have SCP by Proposition \ref{pr:KsmChiral}, so this theorem
establishes Conjecture \ref{conj:subgroup} when $n=4$.

The proof of Theorem \ref{th:subgroup} is by an analysis of cases, and takes up Sections \ref{sec:nontoroidal} and \ref{sec:toroidal}.

Conjugacy classes of finite subgroups of $\Og(4)$ have been classified in several places. The reference
\cite{Conway2003} is corrected slightly in \cite{Rastanawi2022}, and we use the tables of both. In the language of
\cite{Rastanawi2022}, there are four classes of finite subgroups of $\Og(4)$, to wit:
\begin{enumerate}
    \item The \emph{toroidal groups}, which leave invariant two orthogonal $2$-planes $V,W$ in $\R^4$, and therefore leave invariant two circles $S_V,S_W$ in $S^3$ that are \emph{absolutely orthogonal} in the sense of \cite{Rastanawi2022}, and therefore leave invariant the torus of points equidistant from $S_V$ and $S_W$. This class of groups consists of several $1$-, $2$- or $3$-parameter infinite families of groups, and are the most troublesome to deal with.
    \item The \emph{tubical groups}, that leave invariant a Hopf fibration of $S^3$ but not a torus in $S^3$. This class consists of a smaller number of $1$-parameter infinite families, and are more easily dealt with.
    \item The \emph{axial groups} that are congruent to non-toroidal finite subgroups of $\Og(3) \times \Og(1)$. Such
      groups have LCP, by \ref{pr:chiralRepClassifier}, and so Conjecture \ref{conj:subgroup} holds for them without further thought.
    \item The \emph{polyhedral groups}, a family of $25$ sporadic groups that are related to symmetries of regular $4$-dimensional polytopes.
\end{enumerate}

\begin{remark}
  Some of these groups $G$ are \emph{metachiral} in the sense of \cite{Conway2003}. That is, if
  $w \in \Og(4) \sm \SO(4)$, then $w^{-1}Gw \neq G$. A metachiral group cannot have LCP, since $w^{-1}gw \neq g$ for
  some $g \in G$.
\end{remark}

\section{Non-toroidal groups}

\label{sec:nontoroidal}

\subsection{Tubical groups}

The left-tubical groups, in the terminology of \cite{Rastanawi2022}, are the $\SO(4)$-conjugacy classes of groups that
in \cite{Conway2003} take the form $\pm \frac{1}{f} [X \times Y]$, where $X \in \{T, O, I\}$ and $Y$ is cyclic or
dihedral. These are listed, along with generators for specific representatives, in \cite[Table 2]{Rastanawi2022} or
\cite[Table 4.1]{Conway2003}. The right-tubical groups are obtained from the left-tubical groups by conjugating by an
element of $\Og(4)\sm \SO(4)$.

\begin{proposition}
    Theorem \ref{th:subgroup} holds for the tubical groups.
\end{proposition}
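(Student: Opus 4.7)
The aim is to show that any tubical group $G$ satisfies case (b) of Theorem~\ref{th:subgroup}: it contains an element with no $1$-dimensional invariant subspace. Since $K$ will be shown in Section~\ref{sec:toroidal} to be toroidal rather than tubical, case (c) cannot arise for a tubical $G$. By Proposition~\ref{pr:quaternionCondition} together with Remark~\ref{rem:sign}, exhibiting such an element reduces to finding a rotation $[z,y] \in G$ with $\Re(z) \neq \pm \Re(y)$, since conjugate quaternions share the same real part.

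I would enumerate the tubical groups from \cite[Table~2]{Rastanawi2022} (cross-checked against \cite[Table~4.1]{Conway2003}): each left-tubical group is of the form $\pm\tfrac{1}{f}[L \times R]$ with $L \in \{2T, 2O, 2I\}$ and $R$ a cyclic or dihedral quaternion group, and the right-tubical ones are obtained from these by conjugation by an element of $\Og(4)\setminus\SO(4)$. The strategy is that the polyhedral factor $L$ contributes elements of orders $3$, $4$ and $5$, represented quaternionically by rotates of $\w$, $i_O$ and $i_I$, whose real parts lie in $\{\pm\tfrac12,\, 0,\, \tfrac{\pm 1 \pm \sqrt 5}{4}\}$ (see Section~\ref{sec:geometry}). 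By contrast, elements of the cyclic/dihedral factor $R$ have real parts of the form $\cos(k\pi/n)$, which can be controlled explicitly from the generators tabulated for $G$.

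Concretely, for each row of the table, I would read off the generators of $G$, identify one generator (or product of generators) of the shape $[z,y]$ where $z$ is a non-real element of $L$ coming from one of $\w, i_O, i_I$, and verify that the corresponding $y$ has $\Re(y) \neq \pm \Re(z)$. When the kernel condition imposed by $\tfrac{1}{f}$ allows $y = \pm 1$, this is immediate because $\Re(\pm 1) = \pm 1$ is not among the real parts $-\tfrac12, 0, \tfrac{\pm1\pm\sqrt5}{4}$. When $f$ forces $y$ to be nontrivial, the corresponding real part is a cosine that can be checked directly against the candidate $\Re(z)$ and differs from it because the denominators in the two cosines are coprime in the relevant cases. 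The right-tubical case then follows by conjugating the witness back; since conjugation in $\Og(4)$ preserves the property of lacking a $1$-dimensional invariant subspace (an elementary linear-algebraic fact), the same conclusion holds.

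The main obstacle is the bookkeeping introduced by the $\tfrac{1}{f}$ identification: when $f$ is large, not every pair $(z,y) \in L \times R$ corresponds to an element of $G$, and one must consult the generators carefully to confirm that the chosen $[z,y]$ actually lies in $G$. However, the case analysis splits cleanly by $L \in \{2T, 2O, 2I\}$, and within each class only finitely many combinations of $R$ and $f$ occur, so the verification reduces to a finite and routine inspection of the tables.
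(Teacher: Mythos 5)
Your approach is correct and essentially the paper's: inspect the tabulated generators of each tubical group and exhibit an element $[z,y]$ to which Proposition \ref{pr:quaternionCondition} applies. The paper's inspection is sharper---every tubical group is conjugate in $\Og(4)$ to one containing $[i,1]$ or $[\w,1]$ among its listed generators, so the case of a nontrivial second coordinate (and your coprimality argument for the cosines) never actually arises.
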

\begin{proof}
    Each tubical group is conjugate in $\Og(4)$ to one that contains either $[i,1]$ or $[\w, 1]$ or both, by considering the generators in \cite[Table 4.1]{Conway2003}. Since neither $i$ nor $\w$ is conjugate to $1$ in $\HH^\times$, Proposition \ref{pr:quaternionCondition} applies.
\end{proof}

\subsection{Polyhedral and axial groups}

The polyhedral and axial groups, in the terminology of \cite{Rastanawi2022}, are the conjugacy classes of subgroups of $\Og(4)$ that in \cite{Conway2003} are named
\[ \pm \frac{1}{f} [ X \times Y] (\cdot 2_?) \quad \text{or } \frac{1}{f}[X \times Y] (\cdot 2_?), \qquad X, Y \in \{T,O,I, \bar T, \bar O, \bar I\}. \]
These are listed among the groups in \cite[Tables 4.1, 4.2 and 4.3]{Conway2003}.
The axial groups are those whose Coxeter name contains no more than two integers different from $2$. None of the ``metachiral'' classes of \cite[Table 4.1]{Conway2003} (for which Coxeter names are not given) is axial, by comparison with \cite[Table 16]{Rastanawi2022}.

\begin{proposition} \label{pr:thForAxialPolyhedral}
    Theorem \ref{th:subgroup} holds for all axial and polyhedral groups.
\end{proposition}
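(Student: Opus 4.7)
The proof splits into the axial case and the polyhedral case. For axial groups the conclusion is immediate from the definition: some $\Og(4)$-conjugate of $G$ is contained in $\Og(3)\times\Og(1)$, and the $\Og(1)$-factor preserves an orthogonal $1$-dimensional subspace of $\R^4$. Transporting back by the conjugation produces a $1$-dimensional $G$-invariant subspace, so $G$ falls into the first alternative of Theorem \ref{th:subgroup}.

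For polyhedral groups the plan is to place each one in the second alternative by exhibiting an element without any $1$-dimensional invariant subspace. The tool is Proposition \ref{pr:quaternionCondition}: combined with Remark \ref{rem:sign}, if $G$ contains an element $[z,w]$ with $\Re(z) \neq \pm \Re(w)$, then $z$ cannot be conjugate in $\HH^\times$ to $\pm w$, so $[z,w]$ has no $1$-dimensional invariant subspace. The polyhedral groups form a finite list of $25$ sporadic conjugacy classes described in \cite[Tables 4.1--4.3]{Conway2003} (with corrections in \cite{Rastanawi2022}) in the form $\pm\frac{1}{f}[X\times Y](\cdot 2_?)$ or $\frac{1}{f}[X\times Y](\cdot 2_?)$, with $X, Y \in \{T, O, I, \bar T, \bar O, \bar I\}$.

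Each of $T, O, I$ and their orientation-reversed variants contains both the order-$3$ element $\w$, with $\Re(\w) = -\frac{1}{2}$, and the order-$4$ element $i$, with $\Re(i) = 0$. Consequently, in the ``full-product'' groups $\pm[X \times Y]$ the element $[\w, i]$ lies in $G$ and satisfies the mismatched real-part condition, so we are done. For the index-$f$ quotients $\frac{1}{f}[X\times Y]$ the plan is to check, using the explicit generators in the tables, that the pair $[\w, i]$ (or an analogous low-order mismatched pair such as $[\w, i_O]$ or $[i, i_O]$) still lies in $G$; since the quotient identifies factors via a compatibility between low-index normal subgroups of $X$ and $Y$, an appropriate mismatched pair almost always survives. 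The extensions by reflections $(\cdot 2_?)$ only enlarge the group, so any element found in the rotation part remains.

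The genuine edge case that the plan must address is a ``diagonal'' group of the form $\frac{1}{|X|}[X\times X]$, in which every element has the form $[x, x]$: here the element $[x,x]$ acts on $\HH$ as the conjugation $y \mapsto \bar x y x$, which fixes the real axis $\R\cdot 1$, so $G$ has a $1$-dimensional invariant subspace and falls back into the first alternative. The hard part is therefore not conceptual but bookkeeping: carrying out a uniform case-by-case inspection of the Conway--Smith--Rastanawi tables, sorting each polyhedral class into the first alternative (via a diagonal or near-diagonal action fixing a line) or the second (via an explicit $[z, w]$ with $\Re(z) \neq \pm \Re(w)$), and treating any anomalous twisted diagonals that appear by verifying separately that the twisting still leaves the real axis invariant.
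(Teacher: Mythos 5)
Your axial case is exactly the paper's, and your overall strategy for the polyhedral groups --- exhibit an element $[z,w]$ with $\Re(z)\neq\pm\Re(w)$ and invoke Proposition \ref{pr:quaternionCondition} together with Remark \ref{rem:sign} --- is also the paper's. (The paper's version of the ``generic'' step is slightly cleaner: in most cases $[i,1]$ or $[\w,1]$ is already listed among the generators of the group or of its chiral part, and $i$, $\w$ are visibly not conjugate to $\pm 1$.)

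However, there is a genuine gap at exactly the cases that need the most care. Your claim that a mismatched pair ``almost always survives'' in the index-$f$ quotients is the entire content of the polyhedral case, and the cases where it fails are the Galois-twisted diagonals $\pm\frac{1}{60}[I\times\bar I]$ and $+\frac{1}{60}[I\times\bar I]$ (and their $\cdot\,2_?$ extensions): these consist of elements $[x,\pm\sigma(x)]$ where $\sigma$ is induced by $\sqrt5\mapsto-\sqrt5$, so $[\w,i]$ is not in the group (it would force $i=\pm\sigma(\w)=\pm\w$), and each individual listed generator $[\w,\w]$, $[i_I,\pm i'_I]$ has matching real parts and hence a real eigenvector. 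Your fallback --- ``verifying separately that the twisting still leaves the real axis invariant'' --- is false for these groups: $[x,\sigma(x)]$ sends $1$ to $\bar x\,\sigma(x)$, which is $\pm1$ only when $\sigma(x)=\pm x$, and $\sigma$ moves most elements of the binary icosahedral group. These twisted diagonals belong to the \emph{second} alternative of Theorem \ref{th:subgroup}, not the first. The paper resolves them by multiplying the two listed generators to get an element $[z,w]$ with $\Re(z)=-\tfrac14(\sqrt5+1)$ and $\Re(w)=\mp\tfrac14(\sqrt5-1)$, which are neither equal nor opposite, so Proposition \ref{pr:quaternionCondition} applies. Without identifying these exceptional classes and producing such an element, your argument misclassifies them (or simply stalls) precisely where the generic inspection breaks down.
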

\begin{proof}
Theorem \ref{th:subgroup} holds for axial groups, which have an invariant $1$-dimensional subspace of
$\R^4$ and therefore have LCP. The polyhedral groups are the remaining cases.

Generators for specific representatives of polyhedral groups are given in \cite[Tables 4.1, 4.2 and 4.3]{Conway2003}, although beware that the element $i'_I$ is missing a definition and should be taken to be
\[ i'_I = \frac{1}{2}\Big(- \frac{\sqrt{5}-1}{2}i - \frac{\sqrt{5}+1}{2}j + k \Big) \]
according to \cite[Appendix A]{Rastanawi2022}. In most cases, Proposition \ref{pr:quaternionCondition} applies immediately since $[i,1]$ or $[\w, 1]$ is listed among the generators of the polyhedral group or its intersection with $\SO(4)$ (its ``chiral part"). The exceptional cases are $\pm \frac{1}{60}[I \times \bar I]$, $+\frac{1}{60}[ I \times \bar I]$, and their extensions $\pm \frac{1}{60}[I \times \bar I]\cdot 2$ and $+\frac{1}{60}[I \times \bar I] \cdot 2_1$ or $2_3$. For these groups, the listed generators in \cite{Conway2003} or \cite{Rastanawi2022} do not contain elements to which Proposition \ref{pr:quaternionCondition} applies immediately.

For $\pm \frac{1}{60}[I \times \bar I]$, we take the listed generators $[\w, \w]$ and $[i_I, -i'_I]$ and multiply them to give
\[ [z,w] =  \left[ \frac{1}{4}(-\sqrt{5}-1) - \frac{1}{4}(\sqrt{5}-1)j - \frac{1}{2}k, -\frac{1}{4}(\sqrt{5}-1) -\frac{1}{4}(\sqrt{5}+1)i+\frac{1}{2}k \right]. \]
In this presentation,
\[ z+\bar z = \frac{1}{2}(-\sqrt{5}-1) \neq \frac{1}{2}(-\sqrt{5}+1) = w + \bar w\]
so that $z$ and $w$ are not conjugate in $\HH^\times$ and Proposition \ref{pr:quaternionCondition} applies.

Similarly, for $+\frac{1}{60}[I \times \bar I]$, we take the listed generators $[\w, \w]$ and $[i_I, i'_I]$ and multiply to give
\[ [z,w] = \left[ \frac{1}{4}(-\sqrt{5}-1) - \frac{1}{4}(\sqrt{5}-1)j -\frac{1}{2}k, 
\frac{1}{4}(\sqrt{5}-1)+\frac{1}{4}(\sqrt{5}+1)i-\frac{1}{2}k \right]. \]
Again $z$ and $w$ have different traces, so that Proposition \ref{pr:quaternionCondition} applies.

Theorem \ref{th:subgroup} holds for the groups $\pm{1}{60}[I \times \bar I]\cdot 2$ and $+\frac{1}{60}[I \times \bar I]\cdot 2_{?}$ since they admit the groups just considered as subgroups.
\end{proof}

\section{Toroidal Groups}
\label{sec:toroidal}

The toroidal groups, as defined in \cite{Rastanawi2022}, constitute a large collection of parameterized families of finite subgroups of $\Og(4)$. They are the groups that \cite{Conway2003} derive from $[X \times Y]$ where $X$ and $Y$ are both cyclic or dihedral.

Let $T \subset S^3$ denote the torus consisting of points that are equidistant in the euclidean distance on $\RR^4$ from
the unit circle $\{(x,y,0,0) \mid x^2 + y^2 = 1\}$ and $\{(0,0,z,w) \mid z^2 + w^2 \}$. ``Toroidal'' groups are so-named
because they are conjugate, in $\Og(4)$, to groups that leave $T$ invariant. Fix a basepoint
$t_0 = (2^{-1/2}, 0, 2^{-1/2}, 0) \in T$. Every toroidal group $G$ has a \emph{directional} subgroup $G_{t_0}$
consisting of the stabilizer of $t_0$. By lifting to the universal cover $(\R^2, \vec 0) \to (T , t_0)$, we can identify
$G_{t_0}$ with a subgroup of linear isometries of $\R^2$ that leave the standard lattice $\Z^2$ invariant. That is,
$G_{t_0}$ is a subgroup of $D_8 = \langle \rho, \sigma \mid \rho^4 = \sigma^2 = (\rho \sigma)^2 = e \rangle$. This group
has $8$ subgroups, up to conjugacy, which we then use to classify the toroidal groups further, as in Table
\ref{tab:classToroidal}.
\begin{table}
  \centering
  \begin{tabular}{|c|c||c|c|} \hline
    Directional Group & Name in \cite{Rastanawi2022} & Directional Group & Name in \cite{Rastanawi2022} \\ \hline
    $\{e\}$ & Translation & $\langle \rho\sigma \rangle$ & Swap\\
    $\langle \sigma \rangle$ & Reflection & $\langle \rho \sigma, \rho^2 \rangle $ & Full Swap \\
    $\langle \rho^2 \rangle$ & Flip & $\langle \rho \rangle$ & Swapturn \\
    $\langle \rho^2, \sigma \rangle $& Full Reflection & $D_8$ & Full Torus \\ \hline
  \end{tabular}
  \caption{A classification of toroidal groups by the directional part. In the names, the word ``Torus'' has been omitted.}
  \label{tab:classToroidal}
\end{table}

  Up to conjugacy in $\Og(4)$, a torus translation group may be taken to be generated by two translations
  \[  \left[\exp\left(-\frac{2\pi i}{m}\right),1\right],\left[\exp\left(-\frac{(m+2s)\pi i}{mn}\right),\exp\left(\frac{\pi i}{n}\right)\right],\]
  where $m,n\geq 1$ and $-\frac{m}{2}\leq s\leq -\frac{m}{2}+\frac{n}{2}$, according to \cite[Proposition 7.5]{Rastanawi2022}.

\begin{proposition}\label{pr:translation:group}
  Theorem  \ref{th:subgroup} holds for torus translation groups. 
\end{proposition}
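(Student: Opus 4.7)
The plan is to exploit the fact that every element of a torus translation group lies in $\SO(4)$ and has the form $[z,w]$ with $z,w \in \C \subset \HH$ unit complex numbers: both generators displayed in the statement do, and products of such pairs $[z_1,w_1][z_2,w_2] = [z_1z_2, w_1w_2]$ preserve this property. After identifying $\HH = \C \oplus \C j$, the two orthogonal $2$-planes $V_1 = \C$ and $V_2 = \C j$ are each invariant under every element of $G$: an element $[z,w]$ with $z,w \in \C$ sends $x \in V_1$ to $\bar z x w \in \C$ and sends $yj \in V_2$ to $\bar z (yj) w = (\bar z \bar w y) j \in V_2$, using $jw = \bar w j$. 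Let $\phi_i : G \to \SO(V_i) \cong \SO(2)$ denote the resulting rotation representations and set $N_i = \phi_i^{-1}\{\pm I\} \le G$.

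The key observation is that $g \in G$ has a $1$-dimensional invariant subspace of $\R^4$ if and only if $g \in N_1 \cup N_2$. The reverse implication is immediate: if $\phi_i(g) = \pm I$, every line in $V_i$ is $g$-invariant. For the forward direction, suppose $L \subset \R^4$ is a $g$-invariant line, pick a nonzero $v \in L$, and decompose $v = v_1 + v_2$ with $v_i \in V_i$. Then $gv = \lambda v$ for some $\lambda \in \R$ yields $\phi_i(g) v_i = \lambda v_i$ for each $i$. Since a planar rotation has a real eigenvector only when it equals $\pm I$, any $v_i \neq 0$ forces $\phi_i(g) = \pm I$; because at least one $v_i$ is nonzero, $g \in N_1 \cup N_2$.

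The dichotomy required by Theorem \ref{th:subgroup} now follows from the elementary fact that a union of two subgroups is a subgroup only when one contains the other. If some element of $G$ lies outside $N_1 \cup N_2$, it has no $1$-dimensional invariant subspace, and the second alternative of Theorem \ref{th:subgroup} holds by Corollary \ref{cor:redToCyclic4}. Otherwise $G = N_1 \cup N_2$, which forces $G = N_1$ or $G = N_2$; in either case $G$ acts on one of the planes $V_i$ as $\pm I$, so every line in that $V_i$ is $G$-invariant, and $G$ has LCP by Proposition \ref{pr:chiralRepClassifier}. I expect no significant obstacle in executing this plan; the only point requiring real care is the eigenvector analysis above, which is what upgrades the per-element condition into the clean subgroup description $N_1 \cup N_2$ on which the union-of-subgroups lemma can act.
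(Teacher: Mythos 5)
Your proof is correct, but it takes a genuinely different route from the paper's. The paper argues by cases on the parameter $m$: for $m=1$ the group is cyclic, for $m\ge 3$ the first generator is a double rotation to which Proposition \ref{pr:quaternionCondition} applies, and for $m=2$ it works out the congruence conditions on $(n,s)$ under which the second generator fails Proposition \ref{pr:quaternionCondition}, then checks the short list of exceptional groups by hand (finding, e.g., that $1\R$ is invariant for the group generated by $[-1,1]$ and $[-i,i]$). You instead observe that the whole translation group lies in the maximal torus of $\SO(4)$ preserving $\HH=\C\oplus\C j$, characterize the elements possessing a real eigenvector as exactly $N_1\cup N_2$ (the preimages of $\{\pm I\}$ under the two circle-valued characters $\phi_1([z,w])=\bar z w$ and $\phi_2([z,w])=\overline{zw}$), and finish with the union-of-two-subgroups lemma. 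This is cleaner and more robust: it requires no knowledge of the explicit generators or the parameter ranges from \cite[Proposition 7.5]{Rastanawi2022}, and it in fact proves the statement for \emph{every} subgroup of the maximal torus, finite or not, whereas the paper's argument is tied to the normal form of the generators. The paper's approach, by contrast, produces the explicit exceptional groups, which is mildly useful bookkeeping for the later flip-group proposition (whose proof refers back to the exceptional cases of this one). Two points you leave implicit but which are harmless: the statement concerns a conjugacy class and you verify it for the standard representative, which suffices since all three alternatives of Theorem \ref{th:subgroup} are conjugation-invariant; and the ``exactly one'' clause follows from your ``at least one'' together with Propositions \ref{pr:KAllRealEig} and \ref{pr:KNoCommon}, exactly as it does in the paper.
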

\begin{proof}
  If $m=1$, then the group is cyclic and the result holds by \cite{Boyle2023}\benw{fix up}\lalog{What needs to be fixed?}.
  
  If $m=2$, then the group is generated by $[-1,1]$ and $\left[\exp\left(-\frac{(1+s)\pi i}{n}\right),\exp\left(\frac{\pi i}{n}\right)\right]$. This is again subject to Proposition \ref{pr:quaternionCondition}, provided $-1-s \not \equiv 1 \pmod n$ and, if $n$ is even, $-1-s \not \equiv  n/2+1 \pmod n$.

  Given that $ -1 \le s \le \frac{n}{2} - 1$, the condition $-1-s \equiv 1 \pmod n$ implies that $n \in \{1,2\}$ and  $s = n-2$. These values give us the groups generated by
  \[ \{[-1, 1], [1, -1] \} \quad \text{ and } \quad \{[ -1, 1], [-i,i] \}. \]
  The first is cyclic, so has been handled. The second has $1\R \subset \HH= \R^4$ as an invariant subspace.

  Given that $ -1 \le s \le \frac{n}{2} - 1$, the conditions that $n$ is even and $-1-s \equiv n/2 + 1 \pmod n$ imply that $n=2$ and $s=-1$. These values give us the group generated by $\{[-1,1], [1,i]\}$, to which Proposition \ref{pr:quaternionCondition} applies.  

    If $m \ge 3$, then the first generator is a double rotation and we can use Proposition \ref{pr:quaternionCondition}.%
  \end{proof}

  Up to conjugacy in $\Og(4)$, a torus flip group may be taken to be generated by two translations
  \[  \left[\exp\left(-\frac{2\pi i}{m}\right),1\right],\left[\exp\left(-\frac{(m+2s)\pi i}{mn}\right),\exp\left(\frac{\pi i}{n}\right)\right],  [j,j],\]
  where $m,n\geq 1$ and $-\frac{m}{2}\leq s\leq -\frac{m}{2}+\frac{n}{2}$, according to \cite[\S 7.6]{Rastanawi2022}.

\begin{proposition} \label{pr:flip}
   Theorem \ref{th:subgroup} holds for torus flip groups.
\end{proposition}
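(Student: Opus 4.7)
The plan is to follow the pattern of the proof of Proposition~\ref{pr:translation:group}, using the presentation of a torus flip group by the three generators $g_1 = [e^{-2\pi i/m}, 1]$, $g_2 = [e^{-(m+2s)\pi i/(mn)}, e^{i\pi/n}]$, and $g_3 = [j,j]$, and splitting on the value of $m$.

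For $m \ge 3$, the quaternion components of $g_1$ are $e^{-2\pi i/m}$ and $1$, whose real parts $\cos(2\pi/m) \in (-1,1)$ and $1$ are distinct and are not negatives of each other. By Remark~\ref{rem:sign} and Proposition~\ref{pr:quaternionCondition}, $g_1$ then has no $1$-dimensional invariant subspace, and Corollary~\ref{cor:redToCyclic4} concludes. This disposes of the bulk of the families in one stroke.

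For $m \le 2$, I would apply Proposition~\ref{pr:quaternionCondition} to $g_2 = [z,w]$ and enumerate the exceptional pairs $(n,s)$ for which the proposition fails. Comparing $\Re(z) = \cos((m+2s)\pi/(mn))$ with $\pm\Re(w) = \pm\cos(\pi/n)$ yields, modulo $2n$, a short finite list: for $m = 2$, either $s = 0$ with $n \ge 2$ or $(n,s) = (1,-1)$; for $m = 1$, either $s = 0$ with any $n$ or $n$ even with $s = n/2 - 1$. In every other case, $g_2$ itself witnesses the second alternative of Theorem~\ref{th:subgroup}.

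In each exceptional subcase I would exhibit a common $1$-dimensional invariant subspace. The key identity is $e^{i\pi/n} j\, e^{i\pi/n} = j$ (a consequence of $e^{i\theta} j = j e^{-i\theta}$), which shows that whenever $s = 0$ the generator $g_2$ fixes $\R j$ pointwise; since $g_3$ fixes $\R j$ pointwise and $g_1 \in \{\mathrm{id}, -\mathrm{id}\}$, the line $\R j$ is a common invariant. When $m = 1$, $n$ is even, and $s = n/2 - 1$, the element $g_2 = [-e^{i\pi/n}, e^{i\pi/n}]$ restricts to $-\mathrm{id}$ on $\C \subset \HH$, so $\R$ is $g_2$-invariant and, being fixed by $g_3$, is a common invariant. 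The last subcase $(m,n,s) = (2,1,-1)$ collapses to $\langle -\mathrm{id}, [j,j]\rangle$, which preserves both $\R$ and $\R j$.

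The third alternative of Theorem~\ref{th:subgroup} cannot occur, since all three generators lie in $\SO(4)$ and $K$ contains reflections; so torus flip groups are never conjugate to $K$. I expect the principal obstacle to be the enumeration of exceptional $(n, s)$ and the verification that the exceptional subcases are genuinely all captured by the congruences modulo $2n$; the underlying quaternion identities are routine once the correct products are examined.
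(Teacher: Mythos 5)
Your proposal is correct and follows the same overall strategy as the paper's proof: split on $m$, apply Proposition \ref{pr:quaternionCondition} to a generator to land in the second alternative of Theorem \ref{th:subgroup}, and exhibit an explicit invariant line in the exceptional cases. Two differences are worth recording. First, for $m=1$ the paper does not analyze the generators at all; it observes that the group is dihedral and defers to \cite{Boyle2023}, whereas you treat $m=1$ by the same congruence analysis, which makes the argument self-contained. Second, and more substantively, your enumeration of the exceptional pairs for $m=2$ is more complete than the paper's. The paper (importing the casework from the proof of Proposition \ref{pr:translation:group}, whose stated congruence condition tests only whether $z$ can equal $y$ and not whether $z=\bar y$ or $z=-\bar y$) lists only $(n,s)\in\{(1,-1),(2,0)\}$ as exceptional; but for every $n\ge 2$ the pair $s=0$ gives the generator $[e^{-\pi i/n},e^{\pi i/n}]$, which acts by $x\mapsto e^{\pi i/n}xe^{\pi i/n}$ and fixes the $jk$-plane pointwise, so Proposition \ref{pr:quaternionCondition} does \emph{not} apply to it. Your congruence computation modulo $2n$ catches this whole family, and your observation that $\R j$ is then invariant under all three generators (using $e^{i\theta}j=je^{-i\theta}$) places these groups correctly in the first alternative, so the theorem still holds for them. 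In short, your proof is not merely an alternative route; it repairs an incomplete case analysis in the paper's own argument.
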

\begin{proof}
  Observe that the group in question always contains the torus translation group for the same values of the parameters $(m,n,s)$.
  
  If $m=1$, then the group is generated by
  \[ \left[1,1\right],\left[\exp\left(-\frac{(2s-1)\pi i}{n}\right),\exp\left(\frac{\pi i}{n}\right) \right], [j,j],\]
 so it is dihedral, and has therefore been handled by \cite{Boyle2023}\benw{fix}\lalog{What needs to be fixed?}.

  If $m=2$, then for most values of $(n,s)$, the group contains a subgroup without LCP, by the proof of Proposition \ref{pr:translation:group}. The exceptional cases are listed:
  \begin{enumerate}
  \item $n=1, s=-1$; generators $[-1,1], [1,-1],[j,j]$;
  \item $n=2, s=0$; generators $[-1,1], [-i,i],[j,j]$.
  \end{enumerate}
  In each case, $1\R \subset \HH = \R^4$ is an invariant subspace.
  
  If $m=3$, then the group contains $[\exp(-2\pi/m), 1]$, so that Proposition \ref{pr:quaternionCondition} applies.
\end{proof}

\begin{proposition} \label{pr:torusSwapGroups}
    The torus swap groups of \cite[Table 6]{Rastanawi2022} do not have SCP. In particular, Theorem \ref{th:subgroup} holds for them.
\end{proposition}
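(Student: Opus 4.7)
The plan is to produce, in each torus swap group $G$ listed in \cite[Table 6]{Rastanawi2022}, an element $g = [z,w] \in G \cap \SO(4)$ whose quaternion components $z$ and $\pm w$ are not conjugate in $\HH^\times$. By Proposition \ref{pr:quaternionCondition} such a $g$ has no $1$-dimensional invariant subspace, and Corollary \ref{cor:redToCyclic4} then rules out SCP for $G$. By Remark \ref{rem:sign} the required non-conjugacy can be read off from real parts alone. One must look inside $G \cap \SO(4)$ rather than in the reflection coset, because any finite-order element of $\Og(4)$ with determinant $-1$ has an odd number of eigenvalues equal to $-1$ and hence admits a real eigenvector.

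First I would read off the generators listed in \cite[Table 6]{Rastanawi2022}. Each torus swap group contains a torus translation subgroup with the same parameters $(m,n,s)$ as an index-$2$ subgroup, together with a single swap generator exchanging the two absolutely orthogonal circles of $T$. For $m \ge 3$ the translation subgroup already contains $[\exp(-2\pi i/m), 1]$, whose components have different real parts, so Proposition \ref{pr:quaternionCondition} applies immediately to this element of $G$ and the argument is finished via Corollary \ref{cor:redToCyclic4}.

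The remaining work concerns the small-$m$ values $m \in \{1,2\}$ and the specific $(n,s)$ identified in the proof of Proposition \ref{pr:translation:group}, where the translation subgroup alone preserves the subspace $1\R \subset \HH$ and therefore supplies no such $g$. In these finitely many cases I would multiply the swap generator by elementary translation generators to obtain explicit elements $[z,w] \in \SO(4)$, and then verify by direct quaternion arithmetic that the real parts of $z$ and $\pm w$ are distinct. The main obstacle will be this finite but tedious case analysis: it must be confirmed in each exceptional case that combining the swap with a suitable translation genuinely breaks every $1$-dimensional invariant subspace, rather than merely exchanging $1\R$ with another $G$-invariant line. No individual computation should be deep; the difficulty is bookkeeping and covering every small-parameter case systematically in the same style as Propositions \ref{pr:translation:group} and \ref{pr:flip}.
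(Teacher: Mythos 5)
Your strategy coincides with the paper's: exhibit in each torus swap group an element $[z,w]$ with $z$ not conjugate to $\pm w$, apply Proposition \ref{pr:quaternionCondition}, and conclude via Corollary \ref{cor:redToCyclic4}; your preliminary observation that any orientation-reversing finite-order element of $\Og(4)$ has a real eigenvector (so the witness must be sought in $G \cap \SO(4)$) is correct and is implicit in the paper's argument. The one divergence is that the paper's proof asserts, after inspecting \cite[Table 4.1]{Conway2003} and \cite[Table 7]{Rastanawi2022}, that a suitable generator \emph{already appears in every row} --- the swap-invariance of the translation lattice, together with the parameter restrictions built into the tables, means the small-parameter degenerations you flag (where the translation subgroup preserves a line such as $1\R$) do not occur within this family, the would-be exceptions being reclassified elsewhere (e.g.\ as axial groups). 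Your proposal is therefore sound in outline but defers exactly the substantive content of the proposition --- the confirmation that no torus swap group retains a $1$-dimensional invariant subspace --- to an unexecuted case check; to complete it you would need to carry out that inspection of the cited tables, at which point your argument reduces to the paper's.
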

These groups are the toroidal groups appearing in the last five rows of \cite[Table 4.1]{Conway2003}.
\begin{proof}
    The proof consists of an inspection of the generators given either in \cite[Table 4.1]{Conway2003} or in \cite[Table 7]{Rastanawi2022}. In all cases, a generator appears to which Proposition \ref{pr:quaternionCondition} applies.
\end{proof}

\begin{proposition} \label{pr:fullTorusSwapGroups}
    The full torus swap groups and the full torus groups of \cite[Table 6]{Rastanawi2022} do not have SCP. In particular, Theorem \ref{th:subgroup} holds for them.
\end{proposition}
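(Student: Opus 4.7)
The plan is to mimic the proof of Proposition \ref{pr:torusSwapGroups}: for each full torus swap group and each full torus group, I will exhibit an element $g=[z,w]$ of the group to which Proposition \ref{pr:quaternionCondition} applies, i.e., such that $z$ is not conjugate to $\pm w$ in $\HH^\times$. By Proposition \ref{pr:quaternionCondition}, such an element has no $1$-dimensional invariant subspace, and then Corollary \ref{cor:redToCyclic4} immediately yields that the containing group does not have SCP.

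First I would use Table \ref{tab:classToroidal} to observe that a full torus swap group has directional subgroup $\langle \rho\sigma, \rho^2 \rangle$, which contains the directional subgroup $\langle \rho\sigma\rangle$ of a torus swap group; and a full torus group has directional subgroup $D_8$, which contains $\langle \rho\sigma, \rho^2 \rangle$. Matching this against the generator lists in \cite[Table 6]{Rastanawi2022}, each full torus swap group is generated by adjoining a flip generator (of the form $[j,j]$ or equivalent) to the generators of a torus swap group with the same translational parameters, and each full torus group is in turn an extension of a full torus swap group. Hence each such group contains a torus swap group as a subgroup.

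Next, for the generic parameter values, the element produced in the proof of Proposition \ref{pr:torusSwapGroups} already satisfies the hypothesis of Proposition \ref{pr:quaternionCondition}, and that same element lies in every supergroup under consideration. This immediately handles the generic case by Corollary \ref{cor:redToCyclic4}. For any exceptional small-parameter boundary values where the torus swap subgroup happens to admit a $1$-dimensional invariant subspace, I would instead use the additional flip generator of the full torus swap group and/or the reflection generator of the full torus group, possibly multiplied with one of the translational generators, to construct an element $[z,w]$ with, for instance, $\Re(z) \neq \Re(w)$, which by Remark \ref{rem:sign} forces $z$ and $w$ to be non-conjugate in $\HH^\times$.

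The hard part will be the careful bookkeeping of these exceptional low-parameter cases, since the inheritance argument from torus swap groups only works once the subgroup's distinguished element is already generic; this parallels the exceptional-case analysis carried out in Propositions \ref{pr:translation:group} and \ref{pr:flip}. Since the number of exceptional parameter values is finite and the generator lists of \cite[Table 6]{Rastanawi2022} are short, each such case can be settled by direct inspection and a short computation of a product of two generators, completing the proof of Theorem \ref{th:subgroup} for all full torus swap and full torus groups.
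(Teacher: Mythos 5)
Your proposal is correct and takes essentially the same route as the paper: both arguments reduce to exhibiting, by inspection of the classification tables, an element to which Proposition \ref{pr:quaternionCondition} applies (hence an element with no $1$-dimensional invariant subspace, so Corollary \ref{cor:redToCyclic4} rules out SCP), and both handle the full torus groups by passing to a full torus swap subgroup. The only difference is cosmetic: the paper inspects the generators of the full torus swap groups directly and finds that a suitable generator always appears, so the exceptional low-parameter analysis you anticipate turns out to be unnecessary here, in contrast with the translation and flip families.
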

These groups are the toroidal groups that depend only on two parameters, $m$ and $n$ in \cite[Table 4.2 and Table 4.3]{Conway2003}.
\begin{proof}
    The proof for the full torus swap groups consists of an inspection of the generators given either in \cite[Table 4.2]{Conway2003} or in \cite[Table 8]{Rastanawi2022}. In all cases, a generator appears to which Proposition \ref{pr:quaternionCondition} applies.
    
    The case of a full torus group $G$ can be deduced that of from $G \cap \SO(4)$, which is a full torus swap group, or it can be seen directly from \cite[Table 8]{Rastanawi2022}.
\end{proof}

What remains are the groups appearing in the last four lines of \cite[Table 4.2]{Conway2003} and in the last four lines of \cite[Table 4.3]{Conway2003}. For some errata concerning these lines, see \cite[Appendix G]{Rastanawi2022}.

\begin{lemma}\label{lm:enem}
    If $m, n$ are positive integers, at least one of which is greater than $2$, then
    \[ [e^{\frac{\pi i}{n}},e^{\frac{\pi i}{n}}][e^{\frac{\pi i}{m}},e^{\frac{-\pi i}{m}}] = \left[e^{\pi i \left(\frac{1}{n} + \frac{1}{m}\right)},e^{\pi i\left(\frac{1}{n} - \frac{1}{m}\right)}\right]\]
    has no real eigenspaces.
\end{lemma}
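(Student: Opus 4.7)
The plan is to apply Proposition \ref{pr:quaternionCondition} to the pair $z = e^{\pi i(1/n + 1/m)}$, $y = e^{\pi i(1/n - 1/m)}$ and show that $z$ is conjugate in $\HH^\times$ to neither $y$ nor $-y$. Both $z$ and $y$ are unit quaternions lying in the complex subfield $\R[i] \subset \HH$, and I would use the standard fact that two unit quaternions are conjugate in $\HH^\times$ if and only if they have equal real parts: Remark \ref{rem:sign} furnishes one direction, and the converse is the elementary observation that the conjugation orbits in $S^3 \subset \HH$ are the constant-trace $2$-spheres. This reduces both conjugacy questions to a pair of trigonometric equalities.

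The key step is to use sum-to-product identities to rewrite
\begin{align*}
    \Re(z) - \Re(y) &= \cos\!\Big(\tfrac{\pi}{n}+\tfrac{\pi}{m}\Big) - \cos\!\Big(\tfrac{\pi}{n}-\tfrac{\pi}{m}\Big) = -2\sin\!\Big(\tfrac{\pi}{n}\Big)\sin\!\Big(\tfrac{\pi}{m}\Big), \\
    \Re(z) + \Re(y) &= \cos\!\Big(\tfrac{\pi}{n}+\tfrac{\pi}{m}\Big) + \cos\!\Big(\tfrac{\pi}{n}-\tfrac{\pi}{m}\Big) = \phantom{-}2\cos\!\Big(\tfrac{\pi}{n}\Big)\cos\!\Big(\tfrac{\pi}{m}\Big).
\end{align*}
Since $1/m, 1/n \in (0,1]$, the vanishing of the first product forces $m = 1$ or $n = 1$, and the vanishing of the second forces $m = 2$ or $n = 2$. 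Under the stated hypothesis, both types of degenerate pair $(m,n)$ are excluded, so the real parts of $z$ and $\pm y$ are all distinct, $z$ is not conjugate to either $y$ or $-y$, and Proposition \ref{pr:quaternionCondition} then supplies the conclusion.

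I do not anticipate any serious obstacle: the argument is a routine trigonometric computation bolted on to the criterion of Proposition \ref{pr:quaternionCondition}. The only care required is to verify that the hypothesis really does rule out each of the small-index edge cases identified by the two factorizations; in particular, one should double-check the boundary values $m, n \in \{1,2\}$ against the hypothesis to ensure the case analysis is tight.
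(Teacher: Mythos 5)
Your overall strategy is exactly the intended one --- reduce to Proposition \ref{pr:quaternionCondition} via Remark \ref{rem:sign} and the fact that unit quaternions are conjugate in $\HH^\times$ precisely when their real parts agree --- and your two sum-to-product identities are correct. The gap is in the final logical step, at precisely the point you flagged for double-checking: the hypothesis ``at least one of $m,n$ is greater than $2$'' does \emph{not} exclude the degenerate pairs your factorizations identify. It still permits $m=1$ or $n=1$ (e.g.\ $(m,n)=(1,3)$), in which case $\sin(\pi/m)\sin(\pi/n)=0$ and $\Re(z)=\Re(y)$; and it permits $m=2$ or $n=2$ (e.g.\ $(m,n)=(2,3)$), in which case $\cos(\pi/m)\cos(\pi/n)=0$ and $\Re(z)=-\Re(y)$. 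In either situation $z$ is conjugate in $\HH^\times$ to $y$ or to $-y$, and the element \emph{does} have a real eigenspace. Concretely, for $(m,n)=(1,3)$ one finds $z=e^{4\pi i/3}=y$, and $[z,z]$ fixes $1\in\HH$; for $(m,n)=(2,3)$ one finds $z=e^{5\pi i/6}$ and $y=e^{-\pi i/6}$, with $\Re(z)=-\Re(y)=-\sqrt{3}/2$.

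What your computation actually establishes is that the conclusion holds if and only if $m\ge 3$ and $n\ge 3$; the correct hypothesis is ``both greater than $2$,'' not ``at least one.'' So the statement as printed is false for, e.g., $(m,n)=(1,3)$, and no argument for it as stated can close this gap; the paper's own one-line justification (``an immediate consequence of Proposition \ref{pr:quaternionCondition}'') simply does not engage with these boundary cases. Your writeup would be correct --- and an improvement on the paper's --- if you (i) replaced the sentence asserting that the hypothesis excludes the degenerate pairs with the observation that nonvanishing of both products is equivalent to $m,n\notin\{1,2\}$, and (ii) flagged that this is a strictly stronger condition than the one in the lemma, which affects how the lemma can be invoked downstream (in particular in the proof of Proposition \ref{pr:torus:reflection:full}, where it is applied with $n\in\{1,2\}$ and $m\ge 3$).
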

The proof is an immediate consequence of Proposition \ref{pr:quaternionCondition}.

For the next results we will use the notation of section 7 of \cite[Table 6]{Rastanawi2022}

\begin{proposition}
  For all swapturn groups, Theorem \ref{th:subgroup} holds.
\end{proposition}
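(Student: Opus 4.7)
The plan is to follow the strategy used in the preceding propositions: inspect the generators of swapturn groups as listed in \cite[\S 7]{Rastanawi2022} (equivalently, the last four lines of \cite[Table 4.2 and Table 4.3]{Conway2003}), and exhibit an element without real eigenspaces so that Corollary \ref{cor:redToCyclic4} applies. Swapturn groups are parameterized by positive integers $m$ and $n$, and, according to \cite[\S 7]{Rastanawi2022}, up to $\Og(4)$-conjugacy they contain the quaternionic pairs $[e^{\pi i/n}, e^{\pi i/n}]$ and $[e^{\pi i/m}, e^{-\pi i/m}]$ (coming from the translation and swapturn parts, respectively).

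First I would dispose of the generic case: whenever $m \ge 3$ or $n \ge 3$, Lemma \ref{lm:enem} produces the element
\[ \left[e^{\pi i \left(\frac{1}{n} + \frac{1}{m}\right)},\, e^{\pi i\left(\frac{1}{n} - \frac{1}{m}\right)}\right] \in G, \]
which has no real eigenspaces. By Corollary \ref{cor:redToCyclic4}, $G$ does not have SCP, so Theorem \ref{th:subgroup} holds.

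Next I would handle the exceptional small cases $m, n \in \{1,2\}$ individually. In each such case the group is finite of small order and the generators from \cite[Table 6]{Rastanawi2022} can be inspected directly. For each exceptional parameter choice I expect either that $\R \cdot 1 \subset \HH = \R^4$ (or some other axis) is preserved by every listed generator, in which case $G$ has a $1$-dimensional invariant subspace and hence LCP by Proposition \ref{pr:chiralRepClassifier}, or that one of the remaining generators (the swap-reflection part, typically of the form $\ast[\cdot,\cdot]$ composed with a rotation) produces by multiplication with the existing generators an element to which Proposition \ref{pr:quaternionCondition} applies.

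The main obstacle is the bookkeeping: verifying that for every swapturn group in \cite[Table 6]{Rastanawi2022} the element produced by Lemma \ref{lm:enem} really lies in $G$ (and is not merely in the ambient $\SO(4)$), and then correctly sorting the finitely many small-parameter groups into those with an invariant $1$-dimensional subspace and those containing a double rotation without real eigenspaces. This is routine but requires careful comparison with the errata in \cite[Appendix G]{Rastanawi2022}.
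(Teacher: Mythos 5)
There is a genuine gap: your argument rests on a misdescription of the swapturn groups. You assume they are parameterized by a pair $(m,n)$ and contain the translations $[e^{\pi i/n}, e^{\pi i/n}]$ and $[e^{\pi i/m}, e^{-\pi i/m}]$, so that Lemma \ref{lm:enem} applies. That rectangular-lattice description belongs to the torus reflection and full reflection groups (the next proposition), not to the swapturn groups. A swapturn group has directional part $\langle \rho \rangle$, a quarter-turn of the torus, so its translation lattice must be invariant under that quarter-turn: it is a square (Gaussian-integer--type) lattice, and the family is parameterized by integers $a \ge b \ge 0$ with $a \ge 2$. By \cite[\S 7.8]{Rastanawi2022} the generators may be taken to be
\[ \left[\exp\left(\tfrac{-(a+b)\pi i}{a^2+b^2}\right), \exp\left(\tfrac{(a-b)\pi i}{a^2+b^2}\right)\right], \quad \left[\exp\left(\tfrac{(a-b)\pi i}{a^2+b^2}\right), \exp\left(\tfrac{(a+b)\pi i}{a^2+b^2}\right)\right], \quad \ast[-j,1], \]
so the elements you feed into Lemma \ref{lm:enem} are in general not available, your ``generic case'' does not get off the ground, and the case split over $m,n \in \{1,2\}$ does not correspond to anything in this family.

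The actual argument is more direct than what you sketch. By Proposition \ref{pr:quaternionCondition} and Remark \ref{rem:sign}, the first translation generator above has no real eigenspace unless $\pm(a+b) \equiv a-b \pmod{a^2+b^2}$; when $b \ge 1$ (and $a \ge 2$) one has $0 < 2b \le 2a \le a^2+b^2$, so neither congruence holds. When $b = 0$ one may assume $a \ge 3$, since $(a,b)=(2,0)$ degenerates to a reflection group, and then the product of the two translation generators is $[1, \exp(2\pi i/a)]$, which has no real eigenspace. So every nondegenerate swapturn group contains a double rotation without real eigenspaces and Corollary \ref{cor:redToCyclic4} applies; there are no exceptional small cases with an invariant axis to sort out. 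Separately, even within your own framework, the final paragraphs defer the exceptional cases to an unperformed inspection (``I expect either \dots or \dots''), which would not constitute a proof.
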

\begin{proof}
  These groups are those that act on the torus $T$ by rotations and translations. In the classification of
  \cite{Rastanawi2022}, they form a single $2$-parameter family of groups, parameterized by integers $a,b$ satisfying
  \[ a \ge b \ge 0, \qquad a \ge 2.\]
  The case of $(a,b) = (2,0)$ is degenerate, and yields a torus reflection group. We assume either $a \ge 3$ or $b
  \ge 1$.

  According to  \cite[\S~7.8]{Rastanawi2022}, the generators of a torus swapturn group may be taken to be
  \[ \left[\exp\left(\frac{-(a+b)\pi i}{a^2 + b^2}\right), \exp\left(\frac{(a-b)\pi i}{a^2 + b^2}\right)\right], \,  \left[\exp\left(\frac{(a-b)\pi i}{a^2 +
      b^2}\right), \exp\left(\frac{(a+b)\pi i}{a^2 + b^2}\right)\right], \, \ast[-j,1]. \]
  Proposition \ref{pr:quaternionCondition} and
  Remark \ref{rem:sign} imply that the first generator has no invariant $1$-dimensional subspace unless the real parts of
  $\pm \exp\Big(\frac{-(a+b)\pi i}{a^2 + b^2}\Big)$ and of $\exp\Big(\frac{(a-b)\pi i}{a^2 + b^2}\Big)$ are equal, which reduces by
  elementary trigonometry to the condition:
  \[ \pm (a+b) \equiv a-b \pmod{a^2 + b^2}. \]
  If $b \ge 1$, then since $a \ge 2$, it must be that $0< 2b \le 2a \le a^2+b^2$, so that neither of these congruences
  holds. Therefore there is an element in the group without a real eigenspace

  In the remaining case when $b=0$, we consider instead the product of the first two generators, which is
  \[  \left[1 , \exp\Big(\frac{2\pi i}{a}\Big)\right]. \] 
Since $a \ge 3$ when $b=0$, Proposition \ref{pr:quaternionCondition} and
Remark \ref{rem:sign} apply again to tell us this element has no real eigenspace.
\end{proof}

\begin{proposition} \label{pr:torus:reflection:full}
For all full reflection groups, Theorem \ref{th:subgroup} holds.
\end{proposition}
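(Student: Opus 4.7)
The plan is to mirror the approach of Proposition \ref{pr:fullTorusSwapGroups}: every full torus reflection group $G$ contains a torus flip group $H = G \cap \SO(4)$ as a subgroup of index $2$, and we leverage Proposition \ref{pr:flip} applied to $H$.

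For a full reflection group $G$ with parameters $(m,n,s)$, the subgroup $H$ is a torus flip group with the same parameters. By the proof of Proposition \ref{pr:flip}, for all but finitely many small parameter values, $H$ already contains an element with no $1$-dimensional invariant subspace---for example, the double rotation $[\exp(-2\pi i/m), 1]$ whenever $m \ge 3$. Since $H \subset G$, this same element witnesses Theorem \ref{th:subgroup} for $G$.

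This leaves the exceptional cases from Proposition \ref{pr:flip}: $m=1$ (giving a dihedral flip subgroup for which $1\R \subset \HH$ is a common invariant subspace) and $m=2$ with $(n,s) \in \{(1,-1),(2,0)\}$ (again with $1\R$ a common invariant subspace of $H$). In each remaining case, I would read off the explicit additional reflection generator(s) of $G$ from the relevant table of \cite{Rastanawi2022} (or the corresponding line of \cite[Table 4.2 or 4.3]{Conway2003}). Either every additional reflection preserves $1\R$ as a subspace, in which case $G$ retains $1\R$ as a $1$-dimensional invariant subspace and therefore has LCP by Proposition \ref{pr:chiralRepClassifier}; or some additional reflection $\ast[z,w]$ does not preserve $1\R$, in which case I would form its product with a suitable flip or translation generator to obtain an element of $G \cap \SO(4)$ whose $[z',w']$-presentation satisfies the hypotheses of Proposition \ref{pr:quaternionCondition}, giving an element of $G$ without an invariant line.

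The main obstacle is the finite but tedious verification of these exceptional cases: one must translate the listed generators of \cite{Rastanawi2022} into the $[z,w]$ and $\ast[z,w]$ notation of Section \ref{sec:geometry} and, for each candidate group, either confirm that $1\R$ remains invariant under all generators or apply Proposition \ref{pr:quaternionCondition} (or Lemma \ref{lm:enem}) to an explicit product. A secondary subtlety is to check that the parameter range for full reflection groups indeed restricts to only these exceptional tuples, in parallel with the reasoning at the start of the proof of Proposition \ref{pr:flip}, and in particular to rule out the degenerate group $K$ of Theorem \ref{th:subgroup} appearing as a full reflection group.
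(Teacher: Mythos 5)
Your overall strategy---find an element with no invariant line via Proposition \ref{pr:quaternionCondition} for large parameters, then inspect a finite list of small cases---is the same as the paper's, but the execution diverges in two ways, one of which is a genuine gap.

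First, the reduction. The paper does not pass through the flip groups. The full reflection groups are parameterized by pairs $(m,n)$ with $1 \le n \le m$ and $m \neq 1$ (together with one of four subtypes), not by triples $(m,n,s)$, and the paper observes that each such group contains the two translations $[e^{\pi i/n},e^{\pi i/n}]$ and $[e^{\pi i/m},e^{-\pi i/m}]$, so that Lemma \ref{lm:enem} disposes of everything except $m=2$, $n\in\{1,2\}$, leaving $8$ exceptional groups. Your claim that $G \cap \SO(4)$ is a torus flip group ``with the same parameters $(m,n,s)$'' is unsubstantiated, and your resulting list of exceptional cases ($m=1$; $m=2$ with $(n,s)\in\{(1,-1),(2,0)\}$) does not match the correct one. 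This could perhaps be repaired, but as written the bookkeeping is wrong. (A small further discrepancy: in the non-degenerate exceptional cases the common invariant line turns out to be $j\R$, not $1\R$.)

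Second, and fatally: your dichotomy for the exceptional cases---either all generators preserve a fixed line, so $G$ has LCP, or some reflection fails to preserve it and a product yields an element with no invariant line---is false for exactly one of the exceptional groups. The full reflection group with $m=2$, $n=2$ of subtype $\mathbf{p^{2gg}}$ \emph{is} the group $K$, for which there is no common invariant line (Proposition \ref{pr:KNoCommon}) and yet every element has a real eigenvector (Proposition \ref{pr:KAllRealEig}). Your stated plan to ``rule out'' $K$ appearing as a full reflection group cannot succeed, because $K$ genuinely occurs in this family; indeed this is the entire reason the third alternative of Theorem \ref{th:subgroup} exists. A correct proof must recognize this exceptional group as conjugate to $K$ and invoke that third alternative (whose validity rests on the separate argument of Proposition \ref{pr:KsmChiral}); your proposal as written would terminate in this case with neither horn of your dichotomy available.
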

\begin{proof}
  These groups fall into four subfamilies, all of which are parameterized by pairs $(m,n)$ where $1 \le n \le m$ and $m \neq 1$. In each case, the group contains $[e^{\frac{\pi i}{n}},e^{\frac{\pi i}{n}}]$, $[e^{\frac{\pi i}{m}},e^{\frac{-\pi i}{m}}]$, and therefore Lemma \ref{lm:enem} applies unless $m=2$ and $n\in \{1,2\}$.

  We give generators for the 8 remaining cases.
  \begin{enumerate}
  \item \label{p2mm1} $n=1$, subtype $\mathbf{p^{2mm}}$: $[i,i]$, $*[i,i]$, $*[k,k]$;
  \item \label{p2mm2} $n=2$, subtype $\mathbf{p^{2mm}}$:$[i,i]$, $[i,-i]$, $*[i,i]$, $*[k,k]$;
  \item \label{p2mg1} $n=1$, subtype $\mathbf{p^{2mg}}$: $[i,i]$, $*[i,i][i,-i]$, $*[k,k][i,-i]$;
  \item \label{p2mg2} $n=2$, subtype $\mathbf{p^{2mg}}$:$[i,i]$, $[i,-i]$, $*[i,i][\zeta,\bar \zeta]$, $*[k,k][\zeta, \bar \zeta]$;
    ($\zeta \in \RR \oplus i \RR$ is a primitive $8$-th root of $1$).
 \item \label{p2gg1} $n=1$, subtype $\mathbf{p^{2gg}}$: $[i,i]$, $*[i,i][\zeta^3, \zeta]$, $*[k,k][\zeta^3, \zeta]$;
 \item \label{Kgroup} $n=2$, subtype $\mathbf{p^{2gg}}$:$[i,i]$, $[i,-i]$, $*[i,i][i,1]$, $*[k,k][i,1]$;
 \item $n=1$, subtype $\mathbf{c^{2mm}}$: $[i,i]$, $[\zeta^3, \zeta]$, $*[i,i]$, $*[k,k]$;
 \item $n=2$, subtype $\mathbf{c^{2mm}}$:$[i,i]$, $[i,-i]$, $[i,1]$, $*[i,i]$, $*[k,k]$;
 \end{enumerate}

If $a, b\in \RR \oplus i \RR$, then $[a, b]\cdot j = \bar a j b = j ab$. By direct calculation
$\ast [i,i] \cdot j = -i(-j)i= j$ and similarly $\ast [k,k] \cdot j = j$. Using these calculations, we see that
immediately  that $j \RR$ is an invariant  $1$-dimensional subspace for each group appearing above, except for case
\ref{Kgroup}. This group is the group $K$. Therefore Theorem \ref{th:subgroup} holds for these cases.
\end{proof}

\subsection{The group \texorpdfstring{$K$}{K}}

The group $K$ is generated by $k_1=*[i,i][i,1]$ and $k_2=*[k,k][i,1]$, both of which are of order $4$. Consider two planes $V_1, V_2 \subset \HH=\RR^4$, where $V_1$ is spanned by $\{1, i\}$ and $V_2$ is spanned by $\{j,k\}$. Endow both $V_1$ and $V_2$ with orientations based on the given ordering of the basis vectors.

The element $k_1$ acts on $\R^4$ by interchanging $1$ and $-i$ and by rotating $V_2$ by $1/4$ turn. The element $k_2$ acts by interchanging $j$ and $-k$ and by rotating $V_1$ by $1/4$ turn.

We make some observations about $K$: every element of $K$ leaves $V_1$ and $V_2$ invariant. The element $k_1^2$ acts as $-1$ on $V_1$ and as the identity on $V_2$, and similarly $k_2^2$ acts as the identity on $V_2$ and as $-1$ on $V_2$. Notably, $k_1^2, k_2^2$ are central in $K$ and $k_1^2k_2^2=-1$. By direct calculation, $k_1k_2 = -k_2k_1$ and so every element of $K$ can be written uniquely in the form $k_1^{e_1} k_2^{e_2}$ where $e_1, e_2 \in \{0, 1, 2, 3\}$.

\begin{proposition} \label{pr:KAllRealEig}
    Every element of $K$ has a real eigenvalue.
\end{proposition}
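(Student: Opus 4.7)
The plan is to exploit the orthogonal decomposition $\R^4 = V_1 \oplus V_2$, both summands being $K$-invariant as recorded just above the proposition. For any $g \in K$, the restrictions $g\vert_{V_1}$ and $g\vert_{V_2}$ lie in $\OG(2)$, and $g$ has a real eigenvector if and only if at least one of these two restrictions does; in particular, any reflection in $\OG(2)$ works, since such a reflection has eigenvalues $\pm 1$. So the whole proof reduces to showing that for each $g \in K$, either $g\vert_{V_1}$ or $g\vert_{V_2}$ is either a reflection in $\OG(2)$ or of the form $\pm I$.

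Write $g = k_1^{e_1} k_2^{e_2}$ with $e_1, e_2 \in \{0,1,2,3\}$, as permitted by the structural observations. Using the stated actions, $k_1\vert_{V_1}$ is a reflection (it swaps $1$ and $-i$) and $k_2\vert_{V_1}$ is a quarter turn, so the sign character $\epsilon_1(g) := \det(g\vert_{V_1})$ satisfies $\epsilon_1(g) = (-1)^{e_1}$. Symmetrically, $\epsilon_2(g) := \det(g\vert_{V_2}) = (-1)^{e_2}$. Thus if $e_1$ is odd, $g\vert_{V_1}$ is an orientation-reversing element of $\OG(V_1)$, i.e.\ a reflection, and $g$ has a real eigenvector in $V_1$; if $e_2$ is odd, $g$ has a real eigenvector in $V_2$ by the same argument.

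The only remaining case is $e_1, e_2$ both even, which forces $g$ into the four-element subgroup $\{I, k_1^2, k_2^2, k_1^2 k_2^2\} = \{I, k_1^2, k_2^2, -I\}$. Here the proposition follows immediately from the fact, recorded in the paragraph preceding the proposition, that each of $k_1^2$ and $k_2^2$ acts as $\pm I$ on each of $V_1$ and $V_2$: every element of this subgroup is therefore $\pm I$ on each summand, and so certainly has real eigenvalues.

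There is no substantial obstacle: the whole argument is a bookkeeping exercise based on parities of $(e_1,e_2)$, once one records that each generator $k_i$ restricts to a reflection on exactly one of the two invariant planes and to a quarter turn on the other. The only thing to be careful with is to verify (or to read off from the preceding description of $k_1,k_2$) which plane is which, so that the determinant characters $\epsilon_1,\epsilon_2$ are computed correctly on the generators.
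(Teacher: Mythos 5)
Your proof is correct. It differs from the paper's in a way worth noting: the paper also reduces to the normal form $k_1^{e_1}k_2^{e_2}$ and the two invariant planes, but then argues by exhibiting explicit common eigenvectors for three subfamilies ($1-i$ for the elements $k_1^{e_1}k_2^{2e_2}$, $j-k$ for $k_1^{2e_1}k_2^{e_2}$, and $1$ for the four elements with both exponents odd). You instead observe that restriction to each invariant plane followed by the determinant gives a character $K \to \{\pm 1\}$, compute these characters on the generators to get $\epsilon_1(g)=(-1)^{e_1}$ and $\epsilon_2(g)=(-1)^{e_2}$, and invoke the fact that an orientation-reversing element of $\Og(2)$ is a reflection with eigenvalues $\pm 1$. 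Your partition of the cases ($e_1$ odd, $e_2$ odd, both even) therefore replaces the paper's explicit eigenvector computations with a parity bookkeeping argument; this is slightly more systematic and would generalize more readily to other groups preserving an orthogonal splitting into planes, at the cost of being less concrete about where the eigenvectors actually lie (which the paper's version makes visible). One incidental point: the sentence in the paper you cite for the action of $k_1^2$ and $k_2^2$ contains an evident typo (it assigns two contradictory actions to $k_2^2$ on $V_2$), but the content you need --- that each of $k_1^2,k_2^2$ acts as $\pm I$ on each of $V_1,V_2$ --- is correct and follows from $k_i$ restricting to a reflection on one plane and a quarter turn on the other, so your final case goes through.
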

\begin{proof}
    The element $k_1$ has $1-i\in \HH$ as an eigenvector, as has $k_2^2$, with which it commutes. Therefore all elements $k_1^{e_1} k_2^{2e_2}$ have a real eigenvalue. A similar argument applies to elements of the form $k_1^{2e_1}k_2^{e_2}$, which have $j-k$ as an eigenvector.

    Only the four elements $k_1k_2$, $k_1k_2^3$, $k_1^3k_2$ and $k_1^3k_2^3$ remain to be checked. All these have $1 \in \HH$ as an eigenvector.
\end{proof}

\begin{proposition} \label{pr:KNoCommon}
    There is no $1$-dimensional subspace of $\RR^4=\HH$ that is $K$-invariant.
\end{proposition}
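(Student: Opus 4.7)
The plan is to combine two structural facts about $K$ already recorded just before the statement: every element of $K$ preserves the orthogonal splitting $\HH = V_1 \oplus V_2$, and $k_1$ (respectively $k_2$) acts on $V_2$ (respectively $V_1$) as a quarter-turn. The argument is then purely linear-algebraic.

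First I would suppose, for contradiction, that $\R v \subset \HH$ is a $K$-invariant line and decompose $v = v_1 + v_2$ with $v_i \in V_i$. For each $g \in K$ there is a scalar $\lambda_g \in \R$ with $g v = \lambda_g v$. Since $g$ preserves each of $V_1$ and $V_2$, matching components in the equation $g v_1 + g v_2 = \lambda_g v_1 + \lambda_g v_2$ forces $g v_i = \lambda_g v_i$ separately for $i=1,2$. Hence any nonzero $v_i$ would span a $K$-invariant line contained in $V_i$.

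Next I would rule out the existence of such a line in either $V_i$. A $K$-invariant line in $V_1$ is in particular $\langle k_2 \rangle$-invariant, but $k_2|_{V_1}$ is a quarter-turn, which has no real eigenspace; symmetrically, $k_1|_{V_2}$ is a quarter-turn, so no $K$-invariant line exists inside $V_2$ either. Since $v \neq 0$ requires at least one of $v_1$, $v_2$ to be nonzero, this yields a contradiction.

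The argument presents essentially no obstacle beyond trusting the quarter-turn descriptions of $k_1|_{V_2}$ and $k_2|_{V_1}$; should one wish to verify those on the fly, the formulas $k_1(x) = -\bar x\, i$ and $k_2(x) = -j\, \bar x\, k$ (obtained by unwinding the generators $*[i,i][i,1]$ and $*[k,k][i,1]$ under the left-to-right composition convention) make the check a single line of quaternion arithmetic.
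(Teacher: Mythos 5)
Your argument is correct, and it takes a somewhat different route from the paper's. The paper's proof is a direct enumeration: the only $1$-dimensional $k_1$-invariant subspaces of $\HH$ are $(1+i)\RR$ and $(1-i)\RR$, the only $1$-dimensional $k_2$-invariant subspaces are $(j+k)\RR$ and $(j-k)\RR$, and these two collections are disjoint, so no line is invariant under both generators, let alone all of $K$. You instead use the $K$-invariant orthogonal splitting $\HH = V_1 \oplus V_2$: projecting a putative invariant line onto the two factors forces each nonzero component to span a $K$-invariant line inside its factor, which is impossible because $k_2|_{V_1}$ and $k_1|_{V_2}$ are quarter-turns and so have no real eigenvectors. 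Both arguments rest on the explicit description of $k_1$ and $k_2$ recorded just before the statement and are equally short; yours uses strictly less information (only the quarter-turn action of each generator on the ``other'' factor, rather than the full eigenline structure of each generator) and would apply verbatim to any group preserving an orthogonal splitting with elements acting without real eigenvalues on each summand, while the paper's version is the more immediate computation. Your closing formulas $k_1(x) = -\bar x\, i$ and $k_2(x) = -j\,\bar x\, k$ agree with the paper's composition convention and reproduce the stated actions (swapping $1 \leftrightarrow -i$ while quarter-turning $V_2$, and swapping $j \leftrightarrow -k$ while quarter-turning $V_1$), so the facts you rely on do check out.
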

\begin{proof}
    The only $1$-dimensional $k_1$-invariant subspaces of $\HH$ are $(1+i)\RR$ and $(1-i)\RR$, whereas the only $1$-dimensional $k_2$-invariant subspaces of $\HH$ are $(j+k)\RR$ and $(j-k)\RR$.
\end{proof}

\begin{proposition} \label{pr:KsmChiral}
  The group $K$ does not have SCP.
\end{proposition}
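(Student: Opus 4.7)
The plan is to construct an orientation-sensitive topological invariant of the pair $(S^3, K)$ that is preserved under conjugation by any element of the centraliser of $K$ in $\Diff(S^3)$. The invariant will be the linking number of two canonically oriented $K$-invariant round circles.

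Let $S_1 = V_1 \cap S^3$ and $S_2 = V_2 \cap S^3$. Each $S_i$ is the fixed-point set in $S^3$ of one of the two central involutions $k_1^2, k_2^2 \in K$, so any $\phi \in \Diff(S^3)$ commuting with $K$ preserves $S_1$ and $S_2$ set-wise. As unit circles in orthogonal $2$-planes of $\R^4$, they form a Hopf link in $S^3$. The element $k_2$ restricts to $S_1$ as an orientation-preserving rotation of order $4$, and $k_1$ restricts to $S_2$ as an orientation-preserving rotation of order $4$. I would orient $S_1$ so that, for any $p \in S_1$, the $k_2$-orbit $\{p, k_2 p, k_2^2 p, k_2^3 p\}$ appears in this cyclic order along $S_1$ in the positive direction, and orient $S_2$ analogously using $k_1$.

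Now suppose $\phi \in \Diff(S^3)$ centralises $K$. Then $\phi|_{S_1}$ commutes with the order-$4$ rotation $k_2|_{S_1}$. Since an orientation-reversing self-diffeomorphism of a circle conjugates any rotation to its inverse, and since $k_2|_{S_1}$ has order $4$ and so differs from its inverse, $\phi|_{S_1}$ must be orientation-preserving with respect to the canonical orientation of $S_1$; by symmetry, so is $\phi|_{S_2}$. Relative to a fixed orientation of $S^3$, the Hopf linking number $\link(S_1, S_2)$ equals $\pm 1$, and reversing the ambient orientation negates it. If $\phi \in \Diff(S^3) \sm \Diff^+(S^3)$ were to centralise $K$, then on one hand $\link(\phi(S_1), \phi(S_2)) = -\link(S_1, S_2)$ because $\phi$ reverses the ambient orientation, while on the other hand $\link(\phi(S_1), \phi(S_2)) = \link(S_1, S_2)$ because $\phi$ fixes each of the oriented circles $S_1$ and $S_2$. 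This contradicts $\link(S_1, S_2) = \pm 1 \neq 0$, so the centraliser of $K$ lies in $\Diff^+(S^3)$; equivalently, $K$ does not have SCP.

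The main subtlety is to confirm that the orientations placed on $S_1$ and $S_2$ are truly intrinsic to the $K$-action on $S^3$, and hence are preserved by any centraliser. This relies on using the order-$4$ rotations $k_2|_{S_1}$ and $k_1|_{S_2}$ rather than the reflections $k_1|_{S_1}$ and $k_2|_{S_2}$; the argument would collapse if every element of $K$ acted on one of the two circles through an involution only. The availability of these order-$4$ rotations is a structural feature of $K$ that, together with Proposition \ref{pr:KAllRealEig} and Proposition \ref{pr:KNoCommon}, makes the linking-number invariant decisive in this case even though it was unavailable for general elements in Proposition \ref{pr:Cyclicn4}.
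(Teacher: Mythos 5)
Your proof is correct, and it reaches the conclusion by a genuinely different and more economical route than the paper's. The key point you exploit is that $S_1$ and $S_2$ are the fixed-point sets in $S^3$ of the involutions $k_1^2$ and $k_2^2$ (one of them is the identity on $V_1$ and $-\id$ on $V_2$, the other the reverse), so any $\phi$ centralising $K$ satisfies $\phi(S_i)=\Fix(\phi k_i^{2}\phi^{-1})\cap S^3=S_i$ \emph{as a set}, not merely up to $K$-equivariant isotopy. The paper does not use this: it only records that $\phi S_1$ and $\phi S_2$ are $K$-invariant (possibly non-round) circles, and therefore has to invoke the main theorem of \cite{Freedman1995} to isotope them back to the round circles through $K$-invariant circles, together with \cite[Prop.~4.3]{Boyle2023} and a complement argument using $k_1^2$, $k_2^2$ to see that these isotopies do not change the relevant linking numbers. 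Your observation collapses all of that machinery to the single line $\link(S_1,S_2)=\link(\phi S_1,\phi S_2)=-\link(S_1,S_2)$, which contradicts $\link(S_1,S_2)=\pm 1$. The remaining ingredients coincide with the paper's: the orientations of $S_1$ and $S_2$ are pinned down by the order-$4$ rotations $k_2|_{S_1}$ and $k_1|_{S_2}$, and reversing the ambient orientation negates the Hopf linking number. What your approach buys is the elimination of the dependence on \cite{Freedman1995} for this proposition; what the paper's approach buys is a template that would still work if the distinguished circles were only characterised as invariant rather than as fixed-point sets.

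One small point of wording: ``an orientation-reversing self-diffeomorphism of a circle conjugates any rotation to its inverse'' is not literally true for an arbitrary diffeomorphism (the conjugate need not be a rotation at all). What you need, and what is true, is that such a conjugation negates the rotation number, equivalently reverses the cyclic order of a finite orbit; since a $1/4$-turn is not equal to its inverse, a circle diffeomorphism commuting with it must preserve orientation. This is exactly the $C_m$-on-$S^1$ argument of the introduction and of Proposition \ref{pr:Cyclicn2}, so no new input is required, but the statement should be phrased in those terms.
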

\begin{proof}
  We first claim that there are precisely $2$ round circles in $S^3$ that are invariant under the action of $K$. Recall
  that a round circle is the nonempty intersection of $S^3$ with a non-tangent plane in $\RR^4$. The centre of any round
  circle is a point in $\RR^4$ and if the circle is $K$-invariant, then the centre is $K$-fixed.

    The only fixed point of the $K$-action on $\RR^4$ is the origin, so we see that all $K$-invariant round circles in
    $S^3$ must be the intersection of $S^3$ with a $2$-dimensional linear subspace of $\RR^4$. Invariance of the circle
    under $K$ implies that the subspace must also be invariant under $K$. There are exactly two $K$-invariant
    $2$-dimensional subspaces of $\RR^4$, being $V_1$ and $V_2$---these being the only $2$-dimensional subspaces that
    are invariant under $k_1$.

    Therefore the claim holds, and the $2$ circles in question are $S_1 := S^3 \cap V_1$ and $S_2 := S^3 \cap V_2$.

    Let us orient $S_1$ in such a way that $k_2$ acts as a negative $1/4$-turn of $S_1$, and let us orient $S_2$ in such
    a way that $k_1$ acts as a negative $1/4$-turn. Let us orient $S^3$ in such a way that $\link(S_1, S_2) = 1$.

    Now, suppose for the sake of contradiction that $w \in \Diff(S^3) \sm \Diff^+(S^3)$ is such that $wkw^{-1} = k$ for
    all $k \in K$. Consider the oriented circles $wS_1$ and $wS_2$, which are $K$-invariant but not necessarily
    round. Since $w$ is orientation-reversing, $\link(wS_1, wS_2) = -1$.

    By the main result of \cite{Freedman1995}, $wS_1$ is smoothly isotopic, through $K$-invariant circles, to a
    $K$-invariant round circle $S'$. Since $k_2$ acts as a $-1/4$-turn on $wS_1$, \cite[Prop.~4.3]{Boyle2023} tells us
    that $k_2$ acts as a $-1/4$-turn on all the circles in the isotopy, and in particular on $S'$. Therefore $S' = S_1$,
    as oriented circles.
    
    Since $k_2^2$ acts nontrivially on every circle appearing in the isotopy, but trivially on $wS_2$, it follows that
    the isotopy occurs in $S^3 \sm wS_2$, and so
    \[ -1= \link(wS_1, wS_2) = \link(S_1, wS_2). \]

    We now repeat this isotopy argument, with $wS_2$ in the place of $wS_1$, with $k_1$ in the place of $k_2$ and with $S_1$ in
    the place of $wS_2$. We deduce that there is an isotopy from $wS_2$ to $S_2$ in the complement of
    $S_1$. Therefore
    \[ -1 = \link(S_1, wS_2) = \link(S_1, S_2) = 1, \] which is a contradiction. This proves that $K$ does not have SCP.
  \end{proof}

\printbibliography
\end{document}
%%% Local Variables:
%%% mode: latex
%%% TeX-master: t
%%% End: